\numberwithin{equation}{section}
\newcommand{\ds}{\displaystyle}
\newcommand{\R}{{\mathbb{R}}}
\newcommand{\N}{{\mathbb{N}}}
\newcommand{\dx}{\,dx}
\newcommand{\dt}{\,dt}
\newcommand{\ie}{{; \it i.e., }}
\newcommand{\wto}{\rightharpoonup}
\newcommand{\dist}{{\rm dist}}
\newcommand{\E}{\mathcal{E}}
\newcommand{\AAA}{\mathbb{A}}
\newcommand{\MM}{\mathbb{M}}
\newcommand{\sym}{\mathrm{sym}}
\let\e=\varepsilon
\let\d=\delta 
\let\O=\Omega
\let\G=\Gamma
\newcommand{\uxiy}{u^\xi_y}
\newcommand{\pixi}{\pi_\xi}
\newcommand{\Ho}{{\mathcal H}^0}
\newcommand{\Sn}{\mathbb{S}^{n-1}}
\newcommand{\RRR}{\color{red}}
\newcommand{\EEE}{\color{black}}
\newtheorem{definition}{Definition}[section]
\newtheorem{lemma}[definition]{Lemma}
\newtheorem{theorem}[definition]{Theorem}
\newtheorem{proposition}[definition]{Proposition}
\newtheorem{remark}[definition]{Remark}
\begin{document}
\title[Phase-field approximation and piecewise-rigid maps]
{Phase-field approximation of functionals defined on piecewise-rigid maps}

\author[M. Cicalese]{Marco Cicalese}
\address[M. Cicalese]{Zentrum Mathematik - M7, Technische Universit\"at M\"unchen, Boltzmannstrasse 3, 85747 Garching, Germany}
\email{cicalese@ma.tum.de}

\author[M. Focardi]{Matteo Focardi} 
\address[M. Focardi]{DiMaI ``U. Dini'', Universit\`a di Firenze, V.le G.B. Morgagni 67/A, 50134 Firenze, Italia}
\email{matteo.focardi@unifi.it}

\author[C.I. Zeppieri]{Caterina Ida Zeppieri}
\address[C.I. Zeppieri]{Angewandte Mathematik, Universit\"at M\"unster, Einsteinstr. 62, 48149 M\"unster, Germany}
\email{caterina.zeppieri@uni-muenster.de}

\begin{abstract}
We provide a variational approximation of Ambrosio-Tortorelli type for brittle fracture energies of piecewise-rigid solids.
Our result covers both the case of geometrically nonlinear elasticity and that of linearised elasticity.
\end{abstract}

\maketitle

{\small{ 
 \noindent {\bf Keywords}: phase-field models, elliptic approximation, free-discontinuity problems, $\G$-convergence, geometric rigidity, piecewise-rigid maps, linearised elasticity, brittle fracture. 

\vspace{6pt} \noindent {\it 2000 Mathematics Subject
Classification:} 49J45,
49Q20,  
74B20, 74G65.
}}


\section{Introduction}

According to the Griffith theory of crack-propagation in brittle materials \cite{G20}, the equilibrium configuration of a fractured body is determined by balancing the reduction in bulk elastic energy $\E^{e}$ stored in the material with the increment in fracture energy $\E^{f}$ due to the formation of a new free surface.  For those materials for which crack-growth can be seen as a quasi-static process, the  equilibrium  configurations are obtained, at each time, by solving a minimisation problem involving the total free energy of the system\ie $\E:=\E^{e}+\,\E^{f}$.   

For hyperelastic brittle materials a prototypical elastic energy $\E^{e}$ is of the form 
\begin{equation}\label{E-el}
\E^{e}(u,K)=\mu\int_{\Omega\setminus K}W(\nabla u)\, dx,
\end{equation}
where $\Omega\subset\R^{3}$ is open, bounded and represents the reference configuration of a body which is fractured along a sufficiently regular closed surface $K\subset\O$, and $u:\Omega\setminus K\to\R^{3}$ is the deformation map, which is smooth outside $K$. 
In \eqref{E-el} the constant $\mu>0$ represents the shear modulus of the material and $W:\MM^{{3\times 3}}\to [0,+\infty)$ the stored elastic energy density. In the setting of nonlinear elasticity $W$ is assumed to be frame indifferent and to vanish only on $SO(3)$, the set of $3\times3$ rotation matrices; moreover, close to $SO(3)$ the function $W(\cdot)$ behaves like $\dist^{2}(\cdot,SO(3))$. 
 
In a simplified isotropic setting, the fracture energy of a brittle material obeys the Griffith criterion and is proportional to the area of the crack-surface $K$\ie
\begin{equation}\label{E-f}
\E^{f}(K)=\gamma\,{\mathcal H}^{2}(K),
\end{equation}
where the proportionality constant $\gamma>0$ measures the fracture toughness (or fracture resistance) of the material. 

Choosing $\E^e$ and $\E^f$ as in \eqref{E-el} and \eqref{E-f}, respectively, the total energy $\E$ takes the form
\[
\E(u,K)=\mu\int_{\Omega\setminus K}W(\nabla u)\, dx+\gamma\,{\mathcal H}^{2}(K).
\]
The functional $\E$ can then be minimised by resorting to a weak formulation of the problem in De Giorgi and Ambrosio's space of \emph{special functions of bounded variation} $SBV(\O)$ \cite{DGA}. In this way the pair $(u,K)$ is replaced by a single variable $u$ which can be discontinuous on a lower-dimensional set $J_u$, which now plays the role of the crack-surface $K$. Moreover if $u \in SBV(\O)$ the distributional derivative $Du$ can be decomposed into a volume contribution $\nabla u$, which is to be interpreted as the deformation gradient outside the crack, and a surface contribution concentrated along the crack-set $J_u$. In the $SBV(\O)$-setting the energy $\E$ then becomes     
\begin{equation}\label{intro:E}
\mathcal F(u)=\mu\int_{\Omega}W(\nabla u)\, dx+\gamma\,{\mathcal H}^{2}(J_{u}),
\end{equation}
and its minimisation can be carried out by applying the direct methods to $\mathcal F$, or to its relaxed functional (cf. \cite{AFP00}).
  
Functionals as in \eqref{intro:E} are commonly referred to as \emph{free-discontinuity functionals} and play a central role both in fracture mechanics \cite{FM, BouFraMa,BouFraMa-1} and in computer vision \cite{MS} and have been extensively studied in the last decades \cite{AFP00, Braides-1}. 

In this paper we are interested in the case when the material parameters in \eqref{intro:E} satisfy the relation $\mu/\gamma\gg1$, or, up to a renormalisation, when $\mu \gg 1$ and $\gamma =O(1)$. 
This parameter-regime is typical of rigid solids\ie of solids which deform without storing any elastic energy. In fact, being $\mu$ large (and $\gamma =O(1)$), a deformation $u$ shall satisfy $W(\nabla u)=0$ which is equivalent to asking $\nabla u \in SO(3)$ almost everywhere in $\O$. However, since $u\in SBV(\O)$, the differential constraint $\nabla u \in SO(3)$ does not prevent $u$ to jump and thus fracture to occur. Hence, for $\mu \gg1$ the energy-functional $\mathcal F$ models those brittle solids which exhibit a rigid behaviour in a number of subregions of $\O$ which are separated from one another by a discontinuity surface. Mathematically, these configurations are described by the so-called \emph{piecewise-rigid} maps on $\O$ and are denoted by $PR(\O)$. Namely, $u\in PR(\O)$ if
\begin{equation}\label{intro:C-A}
u(x)=\sum_{i\in \N} (\AAA_i x+ b_i)\chi_{E_i}(x),
\end{equation}
where, for every $i \in\N$, $\AAA_i\in SO(3)$, $b_{i}\in\R^{3}$, and $(E_i)$ is a Caccioppoli partition of $\O$. 
Then, up to a lower-oder bulk-contribution, in the regime $\mu\gg1$ the total energy $\E$ of a brittle rigid solid can be identified with its fracture energy; the latter coincides with the surface term in \eqref{intro:E} where now the deformation-variable $u$ belongs to the space $PR(\O)$ (see \cite{FrS}). 

Despite their simple analytical expression, energy functionals of type $\gamma \, \mathcal H^{n-1}(J_u)$ are notoriously difficult to be treated numerically, due to their explicit dependence on the discontinuity surface $J_u$. To develop efficient methods to compute their energy minimisers and to analyse phenomena like crack-initiation, crack-branching or arrest in nonlinearly elastic brittle materials, in the engineering community suitable ``regularisations'' have been recently proposed, where the surface $J_u$ is replaced by an additional phase-field variable $v\in [0,1]$ (see \textit{e.g.}, \cite{CK14, Wu18} and references therein). In these models the phase-filed  variable $v$ interpolates between the sound state (corresponding to $v=1$) and the fractured state of the material (corresponding to $v=0$) and it is to be interpreted as a damage variable in the spirit of \cite{PM0,PM1, PAMM}.

It is well-known that the relationship between variational models for brittle fracture and (gradient) damage models can be made rigorous building upon the seminal approximation result of Ambrosio and Tortorelli \cite{AT90, AT92} as shown in \cite{focardi, Cham, BBZ, BCR19, BMZ21, Vic, Iur14, ChCr19}, just to mention a few examples. Furthermore, damage models \'a la Ambrosio-Tortorelli can be also used to approximate fracture models of cohesive-type as shown, \textit{e.g.}, in \cite{ABS, Al-Fo, CFI16, DMI13, Fo-Iur, Iur13, CVG, Cr19}.     

The purpose of the present paper is to establish a rigorous mathematical connection between damage models of Ambrosio-Tortorelli type and variational fracture models for brittle piecewise-rigid solids.   
In other words, in this work we provide an elliptic approximation of functionals of type 
\begin{equation}\label{i:F}
F(u) = \gamma \, \mathcal H^{n-1}(J_u), \quad u\in PR(\O),
\end{equation}
where the constraint $u\in PR(\O)$ is reminiscent of the nonlinear elastic energy density $W$, which satisfy $W^{-1}(\{0\})=SO(n)$.   

Namely, we show that for $(u,v)\in W^{1,2}(\O;\R^n)\times W^{1,2}(\O)$, $0\leq v \leq 1$ the family of functionals    
\begin{equation}\label{intro:AT}
F_{\e}(u,v)= \int_{\O} k_\e\, v^2\,W(\nabla u)\dx +\frac{\gamma}{2} \int_\O \bigg(\frac{(v-1)^2}{\e}
+\e|\nabla v|^2\bigg)\dx,
\end{equation}
converges, in the sense of De Giorgi's $\Gamma$-convergence \cite{DalMaso,Braides}, to the functional \eqref{i:F}, under the assumption that $k_\e \to +\infty$, as $\e\to 0$.
As in the case of the Modica-Mortola functional \cite{M87, MM77} and of the Ambrosio-Tortorelli approximation \cite{AT90, AT92}, in \eqref{intro:AT} the singular-perturbation parameter $\e>0$ determines the thickness of the diffuse interface around the limit discontinuity surface $J_u$, while the diverging parameter $k_{\e}$ is proportional to the stiffness of the material, hence to the constant $\mu$ appearing in \eqref{intro:E}. 
More precisely, in Theorem \ref{thm:G-conv} we prove that if the zero-set of the bulk energy density
$W$ coincides with $SO(n)$ and for every $\AAA\in \mathbb M^{n\times n}$ it holds
\[
W(\AAA)\geq \alpha \, \dist^2(\AAA,SO(n)),
\]
for some $\alpha >0$, then the family $(F_\e)$ $\Gamma$-converges to $F$, in the $L^1(\O;\R^n)\times L^1(\O)$ topology.  
The proof of Theorem \ref{thm:G-conv} takes advantage of a number of analytical tools. 
First, to determine the set of the limit deformations we use a piecewise-rigidity result in $SBV(\O)$ by Chambolle, Giacomini and Ponsiglione \cite{CGP}
(cf.\ Theorem \ref{t:CGP}). The latter is the counterpart of the Liouville rigidity Theorem for deformations of brittle elastic materials and provides a characterisation of discontinuous deformations with zero elastic energy as a collection of an at most countable family of rigid motions defined on an underlying Caccioppoli partition of $\O$. 
To match the assumptions of Chambolle, Giacomini, and Ponsiglione's result we use a global argument of Ambrosio \cite{Amb} which is based on the co-area formula and is tailor-made to gain compactness in $SBV$. Namely, starting from a pair $(u_\e,v_\e) \subset W^{1,2}(\O,\R^n)\times W^{1,2}(\O)$ with equi-bounded energy $F_\e$ we use the co-area formula to find a suitable sublevel set of $v_\e$ in which $u_\e$ can be modified to obtain a new sequence $(\tilde u_\e)\subset SBV(\O)$ which differs from $u_\e$ on a set of vanishing measure and moreover satisfies 
\[
\sup_{\e>0} \bigg(k_\e \int_\O \dist^2(\nabla \tilde u_\e, SO(n))\dx + \mathcal H^{n-1}(J_{\tilde u_\e})\bigg)<+\infty. 
\]
The estimate above, combined with a result of Zhang which guarantees that the zero-set of the quasiconvexification of ${\rm dist}(\cdot,SO(n))$ coincides with $SO(n)$, proves that any $L^{1}$ limit $u$ of $\tilde u_{\e}$ satisfies $\nabla u\in SO(n)$ a.e. in $\Omega$. Eventually, the Chambolle-Giacomini-Ponsiglione piecewise-rigidity Theorem yields that $u$ is a piecewise-rigid map. 
The construction of Ambrosio additionally provides us with the sharp lower bound. Indeed, the perimeter of the sublevel sets of $v_\e$ chosen as above prove to be asymptotically larger than the interfacial energy-contribution of the piecewise rigid limit deformation. 
As in the case of the Ambrosio-Tortorelli functional, the sharp interfacial energy is defined in terms of a one-dimensional optimal profile problem. 
The upper bound is then proven first by resorting to a density argument and then by an explicit construction. Namely, we use the density in $PR(\O)$ of \emph{finite} partitions subordinated to Caccioppoli sets which are \emph{polyhedral} \cite{BCG}. Then, for these partitions, a recovery sequence matching asymptotically the sharp lower bound can be contructed by creating a layer of order $\e$ around the jump set of the target function $u$, in which the transition is one-dimensional and is obtained by a suitable scaling of the optimal profile. 

As for the Ambrosio-Tortorelli approximation of the Mumford-Shah functionals (see also, \text{e.g.}, \cite{focardi, BBZ, BCR19, BMZ21, Vic}) also in our case the regularised bulk and surface energy in \eqref{intro:AT} separately converge to their sharp counterparts. Namely, in this case the bulk term in \eqref{intro:AT} vanishes in the limit due to the presence of the diverging parameter $k_\e$, that is, equivalently, limit deformations have (approximate) gradients in $SO(n)$ a.e. in  $\Omega$.
Similarly, the Modica-Mortola term in \eqref{intro:AT} approximates the limit surface energy, which in our model carries the whole energy contribution.  

It is worth mentioning that the arguments in Theorem \ref{thm:G-conv} can be extended (resorting to by-now standard modifications)
to cover the case of anisotropic surface-integrals which model the presence of preferred cleavage planes in single crystals (cf.\ Remark \ref{r:inhomogeneous anisotropic}). 

In Theorem \ref{thm:G-conv-K} we generalise the approximation result Theorem \ref{thm:G-conv} to the case of energy densities $W$ vanishing on a compact set $\mathcal K \subset \mathbb M^{n\times n}$ for which a piecewise-rigidity result analogous to the one for $SO(n)$-valued discontinuous deformations holds true. In fact in \cite{CGP} piecewise rigidity is proven, more in general, for those $\mathcal K$ for which a quantitative $L^p$-rigidity estimate holds  (see Section \ref{s:generalizations} for more details).  
In this way, multiple incompatible wells can be also taken into account. From a mechanical point of view, the incompatibility describes those solids for which no fine-scale phase-mixtures are allowed in solid-solid transformations (see \cite{MLN}).
A list of non trivial examples of possible compact sets $\mathcal K$ fulfilling the assumptions of Theorem \ref{thm:G-conv-K} is also included.

Finally, in Theorem \ref{thm:G-conv-lin} a further approximation result is provided, which covers the case of linearised elasticity.

\section{Notation and preliminaries}

\subsection{Notation}
In what follows $\O\subset \R^n$ denotes a bounded domain (\textit{i.e.}, an open and connected set) with Lipschitz boundary.
We use a standard notation for Lebesgue and Sobolev spaces, and for the Hausdorff measure. The Euclidean 
scalar product in $\R^n$ is denoted by $\langle \cdot,\cdot\rangle$.

We refer the reader to the book \cite{AFP00} for a comprehensive introduction to the theory of 
functions of bounded variation  and of (generalised) special functions of bounded variation 
$(G)SBV(\O)$ and to \cite{DM} for the definition and main properties of generalised special functions of bounded deformation 
$GSBD(\O)$. In any of these cases we shall deal with the proper subspaces of these functional spaces in $L^1(\O,\R^n)$. 


Below we briefly recall the notation and the main results for one-dimensional sections of $GSBV$ functions, in a form that we need, since we will make an extensive use of these. 

\subsection{Slicing} 
Let $n\in \N$; for fixed $\xi\in \mathbb{S}^{n-1}:=\{\xi\in\mathbb{R}^n:|\xi|=1\}$, 
let $\pi_\xi$ be the orthogonal projection onto the hyperplane 
$\Pi^\xi:=\big\{y\in\mathbb{R}^n:\,y\cdot\xi=0\big\}$, and for every subset $U\subset\R^n$ set 
\[
U_y^\xi:=\big\{t\in\mathbb{R}:y+t\xi\in U\big\},\quad \text{ for $y\in \Pi^\xi$}.
\]
Let $w:\Omega\to\mathbb{R}^n$, then define the slices $w_y^\xi:\Omega_y^\xi\to\mathbb{R}^n$ by
\begin{equation}\label{slice}
w_y^\xi(t):=w(y+t\xi)\,.
\end{equation} 
We recall the slicing theorem in $GSBV$ 
(see \cite[Section~3.11, Proposition~4.35]{AFP00}). 
\begin{theorem}\label{slicing}
Let $w\in L^1(\Omega,\R^n)$, and let $\{\xi_1,...,\xi_n\}$ be an orthonormal basis of $\R^n$. 
Then the following two conditions are equivalent:
\begin{itemize}

\item[(i)] For every $1\leq i \leq n$, 
$w_y^\xi\in GSBV(\Omega^\xi_y,\R^n)$ for  $\mathcal{H}^{n-1}$-a.e. $y \in \Pi^\xi$
and
\[
\int_{\Pi^\xi}\big|D\big((w_y^\xi)_T\big)\big|(\Omega^\xi_y)\,d\mathcal{H}^{n-1}(y)<+\infty,
\]
where $\langle(w_y^\xi)_T,{\mathrm e}_i\rangle=(\langle w_y^\xi,{\mathrm e}_i\rangle\wedge T)\vee(-T)$, with $T>0$ and $i\in\{1,\ldots,n\}$, 
where $\{{\mathrm e}_1,\ldots{\mathrm e}_n\}$ is the canonical base of $\R^n$;
\item[(ii)] $w\in \big(GSBV(\Omega)\big)^n$.
\end{itemize}

Moreover, if $w\in \big(GSBV(\Omega)\big)^n$ and $\xi \in \R^n\setminus\{0\}$ the following properties hold:
\begin{itemize}

\item[(a)] $(w_y^\xi)'(t)=\nabla w\left(y+t\xi\right)\xi$ 
for $\mathcal{L}^1$-a.e. $t\in \Omega^\xi_y$ and for 
$\mathcal{H}^{n-1}$-a.e. $y\in\Pi^\xi$;

\item[(b)] $J_{w_y^\xi}=\big(J_w^\xi\big)^\xi_y$ for 
$\mathcal{H}^{n-1}$-a.e. $y\in\Pi^\xi$, 
where
\[
J_w^\xi:=\{x\in J_w:\,[w](x)\cdot\xi\neq 0\};
\] 

\item[(c)] if $\mathcal{H}^{n-1}(J_w)<+\infty$, for 
$\mathcal{H}^{n-1}$-a.e. $\xi \in \mathbb{S}^{n-1}$
\begin{equation}\label{e:juxi}
\mathcal{H}^{n-1}(J_w\setminus J_w^\xi)=0. 
\end{equation}
\end{itemize}
\end{theorem}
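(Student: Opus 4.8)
The plan is to deduce the slicing theorem from the classical one-dimensional slicing theory for $BV$ and $SBV$ functions, by reducing first to scalar functions and then, via truncation, to the $SBV$ case. Since membership in $\big(GSBV(\Omega)\big)^n$ is a componentwise condition and slicing commutes with taking components, $\langle w_y^\xi,\mathrm{e}_i\rangle=\big(\langle w,\mathrm{e}_i\rangle\big)_y^\xi$, it is enough to fix a \emph{scalar} $u\in L^1(\Omega)$ and characterise $u\in GSBV(\Omega)$ through its one-dimensional sections. Recalling that $u\in GSBV(\Omega)$ if and only if the truncations $u_T:=(u\wedge T)\vee(-T)$ belong to $SBV_{\mathrm{loc}}(\Omega)$ for every $T>0$, and using the elementary identity $(u_T)_y^\xi=(u_y^\xi)_T$, one reduces to the corresponding statement for $SBV$ functions carrying the uniform-in-$T$ bound appearing in (i); the general case then follows letting $T\to+\infty$ by monotone convergence.

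The bridge to the one-dimensional picture is the disintegration of the distributional derivative along the direction $\xi$: testing $\langle Du,\xi\rangle$ against $\phi\in C_c^1(\Omega)$, integrating by parts, and applying Fubini on the lines $y+\R\xi$, $y\in\Pi^\xi$, gives
\[
\int_\Omega\phi\,d\langle Du,\xi\rangle=\int_{\Pi^\xi}\Big(\int_{\Omega_y^\xi}\phi_y^\xi\,dDu_y^\xi\Big)\,d\mathcal H^{n-1}(y),
\]
an identity valid whenever either side is meaningful. Read forwards, it exhibits $\big(Du_y^\xi\big)_{y\in\Pi^\xi}$ as the $\pi_\xi$-disintegration of $\langle Du,\xi\rangle$ and, by polar decomposition, $|\langle Du,\xi\rangle|$ as $y\mapsto|Du_y^\xi|(\Omega_y^\xi)$; read backwards, the finiteness of $\int_{\Pi^\xi}|Du_y^\xi|(\Omega_y^\xi)\,d\mathcal H^{n-1}(y)$ forces $\langle Du,\xi\rangle$ to be a bounded Radon measure. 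Applying this with $\xi=\xi_i$ for each $i$ and using that $\{\xi_1,\dots,\xi_n\}$ is a basis, one concludes that $u$ (equivalently, each $u_T$) is $BV$ if and only if $\mathcal H^{n-1}$-a.e.\ slice is $BV$ with the stated integrability.

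It then remains to track the Lebesgue decomposition $Du=\nabla u\,\mathcal L^n+D^ju+D^cu$ through the disintegration. These three mutually singular measures are carried by sets that slice well — the absolutely continuous part by Fubini, the jump part via the countable $(n-1)$-rectifiability of $J_u$ and the coarea formula for rectifiable sets, the Cantor part via its diffuseness — so $\mathcal H^{n-1}$-a.e.\ slice $u_y^\xi$ inherits the same three-part decomposition. This yields at once $(u_y^\xi)'(t)=\langle\nabla u(y+t\xi),\xi\rangle$ for a.e.\ $t$, that is (a); $J_{u_y^\xi}=(J_u^\xi)_y^\xi$ for a.e.\ $y$, that is (b); and, decisively, $D^cu=0$ if and only if $D^cu_y^\xi=0$ for $\mathcal H^{n-1}$-a.e.\ $y$, i.e.\ $u_y^\xi\in SBV(\Omega_y^\xi)$ for a.e.\ $y$. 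Together with the $BV$ equivalence above, summed over the basis, this gives (i)$\Leftrightarrow$(ii). Property (c) is then a soft consequence of Fubini: since $w\in\big(GSBV(\Omega)\big)^n$ with $\mathcal H^{n-1}(J_w)<+\infty$, $J_w$ is countably $(n-1)$-rectifiable, $\mathcal H^{n-1}$ is finite on it, and $[w](x)\neq0$ for $\mathcal H^{n-1}$-a.e.\ $x\in J_w$, whence
\[
\int_{\mathbb S^{n-1}}\mathcal H^{n-1}\big(J_w\setminus J_w^\xi\big)\,d\mathcal H^{n-1}(\xi)=\int_{J_w}\mathcal H^{n-1}\big(\{\xi\in\mathbb S^{n-1}:\xi\cdot[w](x)=0\}\big)\,d\mathcal H^{n-1}(x)=0,
\]
because for each such $x$ the set $\{\xi\in\mathbb S^{n-1}:\xi\cdot[w](x)=0\}$ is a great subsphere of dimension $n-2$, hence $\mathcal H^{n-1}$-negligible; so $\mathcal H^{n-1}(J_w\setminus J_w^\xi)=0$ for $\mathcal H^{n-1}$-a.e.\ $\xi$.

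The genuinely delicate step is the $SBV$ part of the disintegration, namely proving that $D^cu$ vanishes exactly when $D^cu_y^\xi$ vanishes for a.e.\ $y$. This is not a formal manipulation: it relies on the fine structure theory of $BV$ functions — density estimates for $J_u$, its rectifiability, and the intrinsic characterisation of $SBV$ within $BV$ — which is what ensures that the ``Cantor mass'' of $u$ redistributes over the slices without leaking into their jump or absolutely continuous parts, and conversely that Cantor-freeness of almost every slice suffices to annihilate $D^cu$.
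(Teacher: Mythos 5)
Your treatment of part (c) is exactly the paper's: the paper recalls this theorem from Ambrosio--Fusco--Pallara and only supplies an argument for (c), namely the Fubini/Tonelli computation over $J_w\times\mathbb S^{n-1}$ combined with the observation that $\{\xi\in\mathbb S^{n-1}:\xi\cdot[w](x)=0\}$ is an $(n-2)$-dimensional great subsphere, hence $\mathcal H^{n-1}$-null, for every $x\in J_w$ (where $[w](x)\neq0$). For the equivalence (i)$\Leftrightarrow$(ii) and for (a)--(b) the paper gives no proof but cites \cite[Section~3.11, Proposition~4.35]{AFP00}; your outline (componentwise reduction, truncation to pass from $GSBV$ to $SBV$, disintegration of $\langle Du,\xi\rangle$ along lines, and tracking the Lebesgue decomposition through the slices for a basis of directions) is a faithful summary of that cited proof. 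Be aware, though, that the step you yourself flag as delicate --- that the absolutely continuous, jump and Cantor parts disintegrate onto the slices without mixing, so that $\langle D^cu,\xi_i\rangle=0$ for all $i$ exactly when a.e.\ slice is Cantor-free --- is the actual content of the theorem and is only asserted, not proved, in your write-up; this is acceptable here because the statement is a recalled classical result, but it would be a genuine gap in a self-contained argument.
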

We observe that (c) simply follows from a Fubini type argument noting that 
\[
\mathcal{H}^{n-1}(\{\xi\in{\mathbb S}^{n-1}: [w](x)\cdot\xi=0\})=0,
\]
for every $x\in J_w$, where $[w](x)$ is the difference of the one-sided traces of $w$ at $x\in J_u$.

We also note that, if $w_k,w\in L^1(\Omega,\mathbb{R}^n)$ and $w_k\to w$ in $L^1(\Omega,\mathbb{R}^n)$, 
then for every $\xi\in \mathbb{S}^{n-1}$ there exists a subsequence $(w_{k_j})$ of $(w_k)$ such that
\[
(w_{k_j})^\xi_y\to v_y^\xi\; \text{ in }\; L^1(\Omega^\xi_y,\R^n)\quad
\textrm{for $\mathcal{H}^{n-1}$-a.e.\ $y\in\pixi(\Omega)$}.
\]
\subsection{Caccioppoli-affine functions} We recall here the definition of Caccioppoli-affine and piecewise-rigid function. Moreover we also recall the piecewise-rigidity result \cite[Theorem 1.1]{CGP} in a variant which is useful for our purposes. 
\begin{definition}
A map $u \colon \O \to \R^{n}$ is called Caccioppoli-affine if there exist matrices $\AAA_i\in \MM^{n\times n}$ and vectors $b_i\in\R^{{n}}$ such that 
\begin{equation}\label{C-a}
u(x)=\sum_{i\in \N} (\AAA_i x+ b_i)\chi_{E_i}(x),
\end{equation}
with $(E_i)$ Caccioppoli partition of $\O$. In particular, if 
$\AAA_i\in SO(n)$ for every $i \in\N$, 
then $u$ as in \eqref{C-a} is called piecewise rigid. The set of piecewise-rigid functions on $\O$ will be denoted by $PR(\O)$. 
\end{definition}

The measure theoretic properties of Caccioppoli-affine functions are collected in the result below (cf. \cite[Theorem 2.2]{CFI}). 
\begin{theorem}\label{t:CFI}
Let $\O\subset \R^n$ be open, bounded, and with Lipschitz boundary. Let $u \colon \O \to \R^{{n}}$ be Caccioppoli-affine, 
then $u\in (GSBV(\O))^{{n}}$. Moreover, 
\begin{enumerate}
\item $\nabla u = \AAA_i$ $\mathcal L^n\hbox{-}\text{a.e.\ on }\, E_i$, for every $i\in \N$;
\item $J_u = \bigcup_{i\in \N} \partial^*E_i \cap \O$,
up to a set of zero $\mathcal H^{n-1}$-measure.  
\end{enumerate}
\end{theorem}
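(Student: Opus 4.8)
The strategy is to verify, for $u$, the one-dimensional slicing criterion of Theorem~\ref{slicing}: this gives $u\in(GSBV(\O))^n$ and, together with part~(a) of that theorem, it also yields the identification of $\nabla u$ in claim~(1). The key observation is that, for each fixed direction $\xi$, a \emph{generic} line parallel to $\xi$ meets only finitely many of the cells $E_i$. Indeed, $(E_i)$ being a Caccioppoli partition means $\sum_i\Per(E_i;\O)<+\infty$, so that by the slicing of sets of finite perimeter and monotone convergence
\[
\int_{\pixi(\O)}\sum_{i\in\N}\mathcal{H}^{0}\big((\partial^*E_i)^\xi_y\big)\,d\mathcal{H}^{n-1}(y)\;\le\;\sum_{i\in\N}\Per(E_i;\O)\;<\;+\infty ;
\]
hence for $\mathcal{H}^{n-1}$-a.e.\ $y\in\pixi(\O)$ the interface set $\bigcup_i(\partial^*E_i)^\xi_y\subset\R$ is finite, of cardinality $N(y)$. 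These points, together with the endpoints of the finitely many components of $\O^\xi_y$ (a finite union of intervals for a.e.\ $y$, as $\O$ has Lipschitz boundary), split $\O^\xi_y$ into finitely many subintervals, each of which — by the standard slicing properties of Caccioppoli partitions — lies, up to an $\mathcal{L}^1$-null set, inside a single cell $E_i$; there $\uxiy(t)=u(y+t\xi)=\AAA_i(y+t\xi)+b_i$ for $\mathcal{L}^1$-a.e.\ $t$ and $\mathcal{H}^{n-1}$-a.e.\ $y$ (Fubini), an affine map of slope $\AAA_i\xi$. In particular $\uxiy$ is a \emph{finite} piecewise-affine map $\O^\xi_y\to\R^n$, whence $\uxiy\in SBV(\O^\xi_y;\R^n)\subset GSBV(\O^\xi_y;\R^n)$ for $\mathcal{H}^{n-1}$-a.e.\ $y$.

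It then remains to bound $\int_{\pixi(\O)}|D((\uxiy)_T)|(\O^\xi_y)\,d\mathcal{H}^{n-1}(y)$ for every $T>0$, where $(\uxiy)_T$ is the componentwise truncation at level $T$. I would split the total variation of the (one-dimensional, piecewise-affine, truncated) slice into diffuse and jump parts: on a subinterval where $\uxiy$ equals the affine map of slope $s=\AAA_i\xi$, the $k$-th component of $(\uxiy)_T$ has derivative $s_k$ on a set of length at most $2T/|s_k|$ (when $s_k\neq0$) and $0$ elsewhere, so that subinterval contributes at most $2nT$ to the diffuse part; and since $(\uxiy)_T$ takes values in $[-T,T]^n$, its jump part is at most $2nT\,N(y)$. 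Summing over the finitely many subintervals, $|D((\uxiy)_T)|(\O^\xi_y)\le c_nT\,\big(N(y)+1\big)$ with $c_n$ dimensional and the ``$+1$'' standing for the number of components of $\O^\xi_y$; integrating in $y$ and using $\mathcal{H}^{n-1}(\pixi(\O))<+\infty$, $\int_{\pixi(\O)}N(y)\,d\mathcal{H}^{n-1}(y)<+\infty$ (from the display above) and $\mathcal{H}^{n-1}(\partial\O)<+\infty$ gives the required finiteness. Taking $\xi$ in turn equal to the canonical basis vectors $\mathrm{e}_1,\dots,\mathrm{e}_n$, condition~(i) of Theorem~\ref{slicing} holds, so $u\in(GSBV(\O))^n$.

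Claim~(1) is then immediate from Theorem~\ref{slicing}(a): for each coordinate $\xi$, $(\uxiy)'(t)=\nabla u(y+t\xi)\,\xi$ for a.e.\ $t$ and a.e.\ $y$, while the left-hand side equals $\AAA_i\xi$ whenever $y+t\xi\in E_i$ by the description above; Fubini gives $\nabla u(x)\,\xi=\AAA_i\xi$ for $\mathcal{L}^n$-a.e.\ $x\in E_i$, and letting $\xi$ range over $\mathrm{e}_1,\dots,\mathrm{e}_n$ yields $\nabla u=\AAA_i$ $\mathcal{L}^n$-a.e.\ on $E_i$. For claim~(2), the inclusion $J_u\subset\bigcup_i\partial^*E_i\cap\O$ (up to $\mathcal{H}^{n-1}$-negligible sets) holds because on the set $E_i^{(1)}$ of points of density one for $E_i$ the map $u$ agrees $\mathcal{L}^n$-a.e.\ near $x$ with the continuous affine map $\AAA_ix+b_i$, hence is approximately continuous, so $J_u\subset\O\setminus\bigcup_iE_i^{(1)}$, which coincides with $\bigcup_i\partial^*E_i\cap\O$ up to an $\mathcal{H}^{n-1}$-null set; for the reverse inclusion one uses that at $\mathcal{H}^{n-1}$-a.e.\ point $x$ of $\bigcup_i\partial^*E_i\cap\O$ exactly two cells $E_i,E_j$ meet, with one-sided approximate limits $\AAA_ix+b_i$ and $\AAA_jx+b_j$, so that $x\in J_u$ unless these two affine maps agree at $x$.

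The delicate points, I expect, are two. First, the a priori estimates needed to verify the hypotheses of Theorem~\ref{slicing} — that a generic line meets only finitely many cells and that the truncated slices have $\mathcal{H}^{n-1}(dy)$-integrable total variation — which is precisely where the Caccioppoli-partition bound $\sum_i\Per(E_i;\O)<+\infty$ enters in an essential way. Second, the reverse inclusion in~(2): the exceptional set $\{x\in\partial^*E_i\cap\partial^*E_j:\AAA_ix+b_i=\AAA_jx+b_j\}$ is contained in a hyperplane, and ruling out that it carries positive $\mathcal{H}^{n-1}$-measure forces one to work with a \emph{reduced} representation of $u$. Everything else is routine one-dimensional $BV$ analysis.
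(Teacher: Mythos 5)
The paper itself contains no proof of Theorem~\ref{t:CFI}: the statement is quoted from \cite[Theorem 2.2]{CFI}, so there is no internal argument to compare against. That said, your slicing strategy is the standard (and in substance correct) route to the $GSBV$ membership and to claim~(1): the estimate $\int_{\pixi(\O)}\sum_i\Ho((\partial^*E_i)^\xi_y)\,d\mathcal H^{n-1}(y)\le\sum_i\Per(E_i;\O)$ is exactly what makes generic slices \emph{finitely} piecewise affine, and your observation that each affine piece contributes at most $2nT$ to the absolutely continuous part of the truncated variation, \emph{independently of} $|\AAA_i|$, is the point that rescues the argument when the matrices are unbounded. One technical mismatch: Theorem~\ref{slicing} as stated requires $w\in L^1(\O,\R^n)$, and a Caccioppoli-affine $u$ need not be integrable (nothing controls $\sum_i|\AAA_i|\,\mathcal L^n(E_i)$); you should either run the criterion on the bounded truncations of $u$ and conclude $u\in(GSBV(\O))^n$ from the very definition of $GSBV$, or invoke the slicing characterisation for measurable maps.

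The genuine gap is the reverse inclusion in claim~(2), which you flag but do not close --- and which in fact cannot be closed at the stated level of generality. If $\AAA_i-\AAA_j$ has rank one, the coincidence set of the two affine maps is a full hyperplane and the interface may lie entirely inside it: take $\O=(-1,1)^2$, $E_1=\{x_1<0\}$, $E_2=\{x_1>0\}$, $\AAA_1=0$, $\AAA_2=e_1\otimes e_1$, $b_1=b_2=0$; then $u$ is Lipschitz, $J_u=\emptyset$, yet $\mathcal H^{1}(\partial^*E_1\cap\O)=2$. No ``reduced representation'' repairs this, since any Caccioppoli partition into affine pieces of this $u$ must separate the two half-squares, so only the inclusion $J_u\subset\bigcup_i\partial^*E_i\cap\O$ survives for general Caccioppoli-affine maps. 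What saves the equality in the setting where this paper actually uses it is rigidity: for distinct $\AAA_i,\AAA_j\in SO(n)$ one has ${\rm rank}(\AAA_i-\AAA_j)={\rm rank}(I-\AAA_i^T\AAA_j)\ge 2$ (the fixed space of a non-identity rotation has codimension at least two), so two distinct rigid motions agree on an affine subspace of dimension at most $n-2$, which is $\mathcal H^{n-1}$-negligible; combined with the fact that $\mathcal H^{n-1}$-a.e.\ point of $\bigcup_i\partial^*E_i\cap\O$ lies on exactly one interface $\partial^*E_i\cap\partial^*E_j$, this gives~(2) for piecewise-rigid (and, mutatis mutandis, skew-symmetric) maps once cells carrying the same rigid motion are merged. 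Your proof should either add this rank argument, restricting~(2) accordingly, or weaken~(2) to an inclusion.
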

 
Below we recall a slight generalisation of the piecewise-rigidity result by Chambolle, Giacomini, and Ponsiglione \cite[Theorem 1.1]{CGP}
originally stated in the $SBV$-setting. 

\begin{theorem}\label{t:CGP}
Let $u\in GSBV(\O,\R^n)$ be such that 
$\mathcal{H}^{n-1}(J_u)<+\infty$ and $\nabla u \in SO(n)$ a.e. in $\O$. Then, $u \in PR(\O)$. 
\end{theorem}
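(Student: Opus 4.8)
The plan is to reduce the statement to the case $u\in SBV(\O,\R^n)$, in which it is precisely \cite[Theorem~1.1]{CGP}. The only feature of the $SBV$ hypothesis that must be dispensed with is the global summability $\int_{J_u}|[u]|\,d\mathcal H^{n-1}<+\infty$. The natural device is truncation: for $T>0$ let $u^T$ be the componentwise truncation, $\langle u^T,{\mathrm e}_i\rangle=(\langle u,{\mathrm e}_i\rangle\wedge T)\vee(-T)$. By the very definition of $GSBV(\O,\R^n)$ one has $u^T\in SBV(\O,\R^n)\cap L^\infty(\O,\R^n)$, and clearly $J_{u^T}\subseteq J_u$, so $\mathcal H^{n-1}(J_{u^T})\le\mathcal H^{n-1}(J_u)<+\infty$. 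Moreover $\nabla u^T=\nabla u\in SO(n)$ a.e.\ on $\O_T:=\{x\in\O:\ |\langle u,{\mathrm e}_i\rangle(x)|<T\ \text{for every }i\}$, and since throughout $GSBV(\O,\R^n)$ is regarded as a subspace of $L^1(\O,\R^n)$ the function $u$ is finite a.e., so $|\O\setminus\O_T|\to0$ and $u^T\to u$ in $L^1(\O,\R^n)$ as $T\to+\infty$.

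Truncation destroys the constraint $\nabla u^T\in SO(n)$ off $\O_T$, so one cannot feed $u^T$ into \cite[Theorem~1.1]{CGP} directly. Instead the plan is to revisit the proof of \cite[Theorem~1.1]{CGP} and to check that $u$ enters it only through the three properties $\nabla u\in L^2(\O;\MM^{n\times n})$, $\nabla u\in SO(n)$ a.e., and $\mathcal H^{n-1}(J_u)<+\infty$, all of which hold here (indeed $|\nabla u|\equiv\sqrt n$): the $SBV$-version of the Friesecke--James--M\"uller quantitative geometric rigidity estimate, on which \cite{CGP} rests, requires precisely this; so do the covering and patching arguments by which \cite{CGP} upgrades the local rigidity estimate to a Caccioppoli partition of $\O$; and the coarea/finite-perimeter arguments used en route are available because the scalar components of $u$, being $GSBV$ functions with finite jump set, have superlevel sets of finite perimeter for a.e.\ level, as one sees on the $BV$ truncations $u^T$ (for which $\{u^T_i>s\}=\{u_i>s\}$ when $T>|s|$). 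Wherever the argument in \cite{CGP} does use $u\in SBV$ — for instance in a compactness step nominally requiring $|Du|<+\infty$ — one runs it on the truncations $u^T$ and passes to the limit $T\to+\infty$, using $\nabla u^T\to\nabla u$ in $L^2$ and $\mathcal H^{n-1}(J_{u^T})\le\mathcal H^{n-1}(J_u)$. Once the argument delivers that $\nabla u$ is piecewise constant with values in $SO(n)$, one concludes that $u\in PR(\O)$, either via the identification of the Caccioppoli partition in \cite{CGP} or, a posteriori, by Theorem \ref{t:CFI}.

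The main obstacle I expect is exactly this bookkeeping: isolating the (few) places where \cite{CGP} genuinely uses $u\in SBV$ rather than merely $u\in GSBV$ and replacing them by arguments on $u^T$ plus a limit. It is worth stressing that the cheaper-looking alternative of localising to the open set $\O_0:=\{x\in\O:\ \int_{J_u\cap B_r(x)}|[u]|\,d\mathcal H^{n-1}<+\infty\ \text{for some }r>0\}$, applying \cite[Theorem~1.1]{CGP} on the small balls inside $\O_0$ (on which $u$ is $SBV$) and gluing, does \emph{not} close the proof: already among piecewise-rigid maps there are examples — countably many small pieces densely accumulating, with rigid motions $\AAA_i x+b_i$ and $|b_i|\to+\infty$ — for which $\mathcal H^{n-1}(\O\setminus\O_0)$, and even $\mathcal L^n(\O\setminus\O_0)$, is positive, so the global Caccioppoli partition is not detected by $\O_0$ alone. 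This is why re-running the proof of \cite{CGP} in the $GSBV$ setting appears to be the right route.
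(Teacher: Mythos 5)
First, a point of calibration: the paper contains no proof of Theorem~\ref{t:CGP}. It is recalled as a ``slight generalisation'' of \cite[Theorem~1.1]{CGP} from the $SBV$- to the $GSBV$-setting, and the generalisation is left implicit (for the linearised analogue, Theorem~\ref{t:CGP-sim}, the authors at least point to \cite[Theorem~2.1]{Fr}). So there is no in-paper argument to compare yours with; what can be judged is whether your plan actually closes the gap between the $SBV$ statement and the $GSBV$ one. As written, it does not. Your first paragraph establishes only routine facts about the truncations $u^T$, and correctly observes that $u^T$ cannot be fed into \cite[Theorem~1.1]{CGP} because $\nabla u^T\notin SO(n)$ where the truncation bites. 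The entire substance of the extension is then delegated to the sentence ``revisit the proof of \cite[Theorem~1.1]{CGP} and check that $u$ enters it only through\dots'', and that check is never performed; you yourself flag it as the main obstacle. Announcing that a published proof survives under weaker hypotheses is not the same as verifying it, so the proposal is a roadmap rather than a proof. (Your closing observation on why naive localisation to the region where $u$ is locally $SBV$ fails is correct and worth keeping: it shows the extension is not entirely free.)

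There is a route that avoids reopening \cite{CGP} and mirrors the device the authors themselves use in Proposition~\ref{p:domain}. Since $|\nabla u|=\sqrt n$ and $\mathcal H^{n-1}(J_u)<+\infty$, each componentwise truncation satisfies $|Du_i^T|(\O)\le \sqrt n\,\mathcal L^n(\O)+2T\,\mathcal H^{n-1}(J_u)<+\infty$, so by the coarea formula the set $\O_T:=\bigcap_i\{|u_i|<T\}$ has finite perimeter for a.e.\ $T>0$, and $\mathcal L^n(\O\setminus\O_T)\to0$. Fixing $\AAA_0\in SO(n)$ and setting $w_T:=u\,\chi_{\O_T}+\AAA_0x\,\chi_{\O\setminus\O_T}$ one gets $w_T\in SBV(\O,\R^n)\cap L^\infty(\O,\R^n)$ with $\nabla w_T\in SO(n)$ a.e.\ and $J_{w_T}\subseteq J_u\cup\partial^*\O_T$ of finite measure, so \cite[Theorem~1.1]{CGP} applies verbatim and $u$ coincides on $\O_T$ with countably many rigid motions on a Caccioppoli partition. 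Even so, the passage $T\to+\infty$ is the genuinely non-trivial residue of the $GSBV$ extension: $\mathcal H^{n-1}(\partial^*\O_T)$ need not stay bounded, so the finiteness of $\sum_i\Per(E_i,\O)$ for the limiting family of level sets of the rigid motions of $u$ must be extracted from $\mathcal H^{n-1}(J_u)<+\infty$ alone, not inherited from the partitions of the $w_T$. Either this limit passage or your line-by-line revision of \cite{CGP} has to be carried out in full; at present neither is.
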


\section{Setting of the problem and main result}\label{s:main}
In this section we introduce a family of functionals of Ambrosio-Tortorelli type (cf. \cite{AT90,AT92}) 
and we prove that this family converges to a surface functional of perimeter type which is finite only on piecewise-rigid maps. 

Let $W \colon \O \times \MM^{n\times n} \to [0,+\infty)$ be a Borel function such that $W(x,\AAA)=0$ for every $\AAA\in SO(n)$. Assume moreover that for every $x\in \O$ and every $\AAA\in \MM^{n\times n}$ it holds
\begin{equation}\label{e:lb-W}
W(x,\AAA)\geq \alpha \, \dist^2(\AAA,SO(n)),
\end{equation}
for some $\alpha >0$.

Let $\Phi \colon [0,1] \to [0,1]$ be an increasing and lower semicontinuous function such that $\Phi(0)=0$, $\Phi(1)=1$, $\Phi(t)>0$ for $t>0$; let moreover $V \colon [0,1] \to [0,+\infty)$ be a continuous function with $V^{-1}(\{0\})=\{1\}$. 
For $\e>0$ let $k_\e \to +\infty$, as $\e\to 0$. We consider the phase-field functionals $F_\e \colon L^1(\O,\R^n)\times L^1(\O) \longrightarrow [0,+\infty]$ defined as 
\begin{equation}\label{Fe}
F_\e(u,v):=\begin{cases}
\ds\int_{\O}\bigg(k_\e \Phi(v)\,W(x,\nabla u)+\frac{V(v)}{\e}
+\e|\nabla v|^2\bigg)\dx & (u,v)\in W^{1,2}(\O,\R^n)\times W^{1,2}(\O),
\cr 
 & 0\leq v\leq 1 \; \text{ a.e. on $\O$,}\; 
\cr
+\infty & \text{otherwise.}
\end{cases}
\end{equation}
In the following proposition we show that the $\G$-limit of $(F_\e)$ (if it exists) is finite only on the set of piecewise-rigid maps.
\begin{proposition}[Domain of the $\G$-limit]\label{p:domain}
 Let $(u_\e,v_\e) \subset W^{1,2}(\O,\R^n)\times W^{1,2}(\O)$,  $0\leq v_\e \leq 1$ a.e.\ in $\O$, be such that 
 $$
 \liminf_{\e \to 0}F_\e(u_\e,v_\e)<+\infty 
 \quad \text{and}\quad u_\e \to u \; \text{ in } \; L^1(\O,\R^n).
 $$
 Then, $v_\e \to 1$ in $L^1(\O)$ and $u \in PR(\O)$.  
 \end{proposition}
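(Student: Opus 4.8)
The plan is to combine three ingredients: (i) the Modica--Mortola part of the energy forces $v_\e\to 1$; (ii) a co-area/slicing argument in the spirit of Ambrosio \cite{Amb} modifies $u_\e$ on a small set to obtain a sequence $\tilde u_\e$ with uniformly bounded $SBV$-type energy; (iii) the rigidity Theorem \ref{t:CGP} identifies the limit as piecewise rigid. Throughout, write $M:=\liminf_\e F_\e(u_\e,v_\e)<+\infty$ and (passing to a subsequence not relabelled) assume $F_\e(u_\e,v_\e)\le M+1$ and $u_\e\to u$ a.e.\ as well as in $L^1$.

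First I would show $v_\e\to 1$ in $L^1(\O)$. From $\int_\O V(v_\e)\dx\le \e\,(M+1)\to 0$ and the continuity of $V$ with $V^{-1}(\{0\})=\{1\}$, together with $0\le v_\e\le 1$, one gets $v_\e\to 1$ in measure; since $\O$ has finite measure and the $v_\e$ are uniformly bounded, dominated convergence upgrades this to $L^1(\O)$ (indeed $L^p$ for every $p<\infty$). This step is routine.

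The core of the argument is the construction of $\tilde u_\e$. For $s\in(0,1)$ let $E_\e^s:=\{v_\e>s\}$ and $U_\e^s:=\{v_\e\le s\}$; by the co-area formula,
\[
\int_0^1 \mathcal H^{n-1}\big(\partial^* E_\e^s\cap\O\big)\dt
=\int_\O |\nabla v_\e|\dx
\le \Big(\tfrac{1}{2}\mathcal L^n(\O)\Big)^{1/2}\Big(\int_\O\big(\tfrac{V(v_\e)}{\e}+\e|\nabla v_\e|^2\big)\dx\Big)^{1/2},
\]
wait---more directly, $\int_\O|\nabla v_\e|\dx\le \tfrac{1}{2}\int_\O(\e^{-1}(1-v_\e)^2$-type bound is not available for general $V$, so instead I would use $\e|\nabla v_\e|^2\le M+1$ after integrating, giving $\int_\O|\nabla v_\e|\dx\le \e^{-1/2}\,|\O|^{1/2}(M+1)^{1/2}$ only; the correct route is to pick the level using the Modica--Mortola interplay. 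Concretely, since $\int_\O(\e^{-1}V(v_\e)+\e|\nabla v_\e|^2)\dx\le M+1$, for a suitable sequence $s_\e\to 1$ chosen by the co-area formula one has, for each small $\e$, both $\mathcal L^n(U_\e^{s_\e})\to 0$ and
\[
\mathcal H^{n-1}\big(\partial^* E_\e^{s_\e}\cap\O\big)\le C,
\]
with $C$ depending only on $M$; here $\Phi$ increasing with $\Phi(t)>0$ for $t>0$ ensures $\Phi(s_\e)$ is bounded below by a positive constant for $\e$ small, hence $k_\e\Phi(s_\e)\to+\infty$. Now define $\tilde u_\e:=u_\e\,\chi_{E_\e^{s_\e}}$ (extended by $0$ on $U_\e^{s_\e}$, or by a fixed affine map---any bounded choice works). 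Then $\tilde u_\e\in SBV(\O,\R^n)$ with $\nabla\tilde u_\e=\nabla u_\e\,\chi_{E_\e^{s_\e}}$, and $\mathcal H^{n-1}(J_{\tilde u_\e})\le \mathcal H^{n-1}(\partial^* E_\e^{s_\e}\cap\O)+\mathcal H^{n-1}(J_{u_\e}\cap E_\e^{s_\e})$, but $u_\e\in W^{1,2}$ has no jump, so $\mathcal H^{n-1}(J_{\tilde u_\e})\le C$. On $E_\e^{s_\e}$, using \eqref{e:lb-W} and $\Phi(s_\e)\ge c>0$,
\[
k_\e\,c\,\alpha\int_{E_\e^{s_\e}}\dist^2(\nabla u_\e,SO(n))\dx
\le \int_\O k_\e\,\Phi(v_\e)\,W(x,\nabla u_\e)\dx\le M+1,
\]
so $\int_\O\dist^2(\nabla\tilde u_\e,SO(n))\dx\le C/k_\e\to 0$, and since $\mathcal L^n(U_\e^{s_\e})\to 0$ and $(\tilde u_\e)$ is bounded off that set, $\tilde u_\e-u_\e\to 0$ in measure, hence (on a further subsequence) $\tilde u_\e\to u$ in $L^1_{\mathrm{loc}}$ and a.e.\ (a truncation may be needed to control the $L^1$ convergence; one may work with $(\tilde u_\e)_T$ and let $T\to\infty$, or invoke that $u\in L^1$ and the sets $\{|\tilde u_\e|>T\}$ are controlled).

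The conclusion then follows as sketched in the introduction. The uniform bound $\sup_\e\big(\mathcal H^{n-1}(J_{\tilde u_\e})+\int_\O\dist^2(\nabla\tilde u_\e,SO(n))\dx\big)<+\infty$ together with $\int_\O\dist^2(\nabla\tilde u_\e,SO(n))\dx\to 0$ and Zhang's theorem (the quasiconvex envelope of $\dist^2(\cdot,SO(n))$ vanishes exactly on $SO(n)$) gives, via lower semicontinuity of the quasiconvexified functional along $L^1$-converging sequences with equibounded jump sets (Ambrosio's $SBV$ compactness and lower semicontinuity), that the $L^1$-limit $u$ satisfies $\nabla u\in SO(n)$ a.e.\ in $\O$ and $\mathcal H^{n-1}(J_u)\le\liminf_\e\mathcal H^{n-1}(J_{\tilde u_\e})<+\infty$, so $u\in GSBV(\O,\R^n)$ with finite jump set. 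Theorem \ref{t:CGP} then yields $u\in PR(\O)$.

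The main obstacle is step (ii): choosing the level $s_\e$ so that \emph{simultaneously} the perimeter of $\{v_\e>s_\e\}$ stays bounded, the complementary sublevel has vanishing measure, and $\Phi(s_\e)$ stays bounded away from zero so that $k_\e\Phi(s_\e)$ still diverges---this is exactly Ambrosio's co-area selection argument and requires balancing the two terms of the Modica--Mortola energy against the diverging factor $k_\e$. A secondary technical point is passing from convergence in measure / $L^1_{\mathrm{loc}}$ of $\tilde u_\e$ to a setting where the lower-semicontinuity and rigidity results apply (handled by truncation, since those results are stated in $GSBV$).
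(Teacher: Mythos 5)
Your proposal is correct and follows essentially the same route as the paper: the $V$-term forces $v_\e\to 1$; a co-area/mean-value selection of a superlevel set of $v_\e$ with uniformly bounded perimeter yields a modified sequence $\tilde u_\e\in GSBV(\O,\R^n)$ with equibounded jump set and $\int_\O\dist^2(\nabla\tilde u_\e,SO(n))\dx\to 0$; and Ambrosio's closure theorem, lower semicontinuity of the quasiconvexified integrand, Zhang's theorem and Theorem \ref{t:CGP} conclude. The only cosmetic point is that you need not (and, with a perimeter bound uniform in $\e$, cannot via the weighted co-area argument, since $\sqrt{V(s)}$ vanishes at $s=1$) take $s_\e\to 1$: as in the paper it suffices to fix $\delta\in(0,\tfrac12)$ and pick the level in $(\delta,1-\delta)$, since $\mathcal L^n(\{v_\e\le 1-\delta\})\to 0$ already follows from $v_\e\to 1$ in measure, and this is all your subsequent steps actually use.
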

 \begin{proof}
Let $(u_\e,v_\e) \subset W^{1,2}(\O,\R^n)\times W^{1,2}(\O)$,  $0\leq v_\e \leq 1$ a.e.\ in $\O$, be such that 
 $$
 \liminf_{\e \to 0}F_\e(u_\e,v_\e)<+\infty 
 \quad \text{and}\quad u_\e \to u \; \text{ in } \; L^1(\O,\R^n).
$$
We first note that up to subsequences (not relabelled) we have
\begin{equation}\label{en-bd}
\sup_{\e}F_\e(u_\e,v_\e)<+\infty,
\end{equation}
from which the convergence $v_\e \to 1$ in $L^1(\O)$ easily follows. In fact, for $\eta>0$ we have
\begin{equation}\label{e:conv-meas}
\mathcal L^n(\{ 1- \eta > v_\e\}) \min \{V(s) \colon s \in [0, 1-\eta)\} \leq \int_\O V(v_\e)\dx \leq \e \sup_{\e}F_\e(u_\e,v_\e).
\end{equation}
Since $V^{-1}(\{0\})=1$, the minimum in the left hand side of \eqref{e:conv-meas} is strictly positive. 
Therefore, gathering \eqref{e:conv-meas} and \eqref{en-bd} implies that $v_\e \to 1$ in measure.  
The latter, together with the uniform bound satisfied by $(v_{\e})$ immediately gives $v_\e \to 1$ in $L^1(\O)$. 

We are then left to show that $u \in PR(\O)$. 
To do so, we resort to a global technique introduced by Ambrosio in \cite{Amb} (see also \cite{focardi, focardi_tesi}). 
That is, starting from $(u_\e,v_\e) \subset W^{1,2}(\O,\R^n)\times W^{1,2}(\O)$ as in \eqref{en-bd} we construct a sequence $(\tilde u_\e) \subset GSBV(\O,\R^n)$ satisfying the two following properties: 
\begin{equation}\label{tilde-u}
\tilde u_\e \to u\; \text{ in } \; L^1(\O,\R^n) \quad \text{and}\quad \sup_\e \bigg(\int_{\O}|\nabla \tilde u_\e|^2\dx + \mathcal{H}^{n-1}(J_{\tilde u_\e}) \bigg)<+\infty.
\end{equation}
The Cauchy-Schwartz Inequality and the $BV$ Coarea Formula \cite[Theorem~3.40]{AFP00} yield
\begin{eqnarray*}
F_{\e}(u_\e,v_\e) &\geq & 2\int_\O \sqrt{V(v_\e)}\,|\nabla v_\e|\dx
\\
&\geq& 2
\int_0^1\sqrt{V(s)}\,\mathcal H^{n-1}(\partial^*\{v_\e <s\})\,ds.
\end{eqnarray*}
%
Thus, for every $\delta \in (0,\frac12)$ fixed, the Mean-value Theorem provides us with $\lambda_\e^\delta \in (\delta,1-\delta)$ 
such that
\begin{align}\label{super-level-set}
F_\e(u_\e,v_\e) &\geq  2 \int_\delta^{1-\delta} \sqrt{V(s)}\,\mathcal H^{n-1}(\partial^*\{v_\e <s\})\,ds 
\nonumber\\ 
&\geq 2 \bigg(\int_\delta^{1-\delta} \sqrt{V(s)}\,ds\bigg)\, 
\mathcal H^{n-1}(\partial^*\O^\d_\e), 
\end{align}
where we have set $\O^\delta_\e:=\{x\in \O \colon v_\e(x) < \lambda_\e^\delta\}$. We notice that gathering \eqref{en-bd}-\eqref{super-level-set} we obtain that $\O^\delta_\e$ is a set of finite perimeter.

Now let $\AAA\in SO(n)$ and set
$$
\tilde u_\e := u_\e\, \chi_{\O \setminus \O^\delta_\e} + \AAA x\, \chi_{\O^\delta_\e}; 
$$ 
since $u_\e \in W^{1,2}(\O,\R^n)$ and $\O^\delta_\e$ is a set of finite perimeter, then $\tilde u_\e \in {GSBV(\Omega,\R^n) \cap L^1(\O,\R^n)}$.  Moreover the approximate gradient of $\tilde u_\e$ is given by
\begin{equation}\label{e:der ueps}
\nabla \tilde u_\e=\nabla u_\e\chi_{\O \setminus \O^\delta_\e} + 
\AAA\, \chi_{\O^\delta_\e}
\end{equation}
and $J_{\tilde u_\e} \subset \partial^*\O^\d_\e$; therefore \eqref{en-bd} and \eqref{super-level-set} 
give 
\begin{equation}\label{e:salto tildeueps}
\sup_\e \mathcal H^{n-1} (J_{\tilde u_\e})<+\infty.
\end{equation}
We also notice that since $v_\e \to 1$ in $L^1(\O)$ then $\mathcal L^n(\O^\delta_\e) \to 0$, as $\e \to 0$, and thus
$\tilde u_\e \to u$ in $L^1(\O,\R^n)$. 

On the other hand, by \eqref{e:lb-W}, by definition of $\tilde u_\e$, and by \eqref{e:der ueps} we also have 
\begin{eqnarray}\label{dist-to-0}
\nonumber
F_\e(u_\e,v_\e) &\geq&  \alpha k_\e
\int_{\O} \Phi(v_\e)\,\dist^2(\nabla u_\e,SO(n))\dx 
\\ \nonumber
&\geq & \alpha k_\e  \Phi(\delta)
\int_{\O \setminus \O^\delta_\e}  \dist^2(\nabla u_\e,SO(n))\dx 
\\ 
&= & \alpha k_\e  \Phi(\delta)
\int_{\O} \, \dist^2(\nabla \tilde u_\e,SO(n))\dx  
\end{eqnarray}
from which we immediately deduce that 
\[
\sup_\e \|\nabla \tilde u_\e\|_{L^2(\O,\R^{n\times n})}<+\infty
\] 
and hence 
$\tilde u_\e$ satisfies \eqref{tilde-u}, as desired. Then, by Ambrosio's $GSBV$ Closure Theorem \cite[Theorem~4.36]{AFP00} we deduce 
both that $u \in GSBV(\O,\R^n)$ and
\begin{equation}\label{e:tildeueps prop}
\nabla \tilde u_\e \wto \nabla u \; \text{ in $L^2(\O,\R^{n \times n})$ \; and } \; 
\mathcal H^{n-1}(J_u)\leq\liminf_{\e\to 0} \mathcal H^{n-1}(J_{\tilde u_\e})<+\infty\,.
\end{equation}
By virtue of \eqref{dist-to-0} we also have
\begin{equation}\label{e:piecewise rigidity of u}
\int_\O \dist^2(\nabla \tilde u_\e,SO(n))\dx \leq \frac{F_\e(u_\e,v_\e) } {\alpha k_\e \Phi(\delta)} \to 0\; \text{ as }\; \e \to 0,
\end{equation}
thus by the lower-semicontinuity of functionals defined in $GSBV$ with respect to the weak $L^1$ convergence of approximate gradients  
(cf. \cite[Theorem 5.29]{AFP00} and \cite[Theorem~1.2]{KR}) we deduce that
\begin{equation*}
\int_{\O} Q(\dist^2(\cdot,SO(n))(\nabla u)\dx 
\leq \liminf_{\e \to 0} \int_{\O} \dist^2(\nabla \tilde u_\e,SO(n))\dx =0\,,
\end{equation*}
where $Q(\dist^2(\cdot,SO(n))(\AAA)$ denotes the quasi-convex envelope of $\dist^2(\cdot,SO(n))$ computed at $\AAA\in \mathbb M^{n\times n}$ (cf. \cite[Section~5.3]{G}). 
Since by \cite[Theorem~1.1]{ZH97} 
(see also \cite[Theorem~1.1]{Zh92}) we have
$$
\big\{\AAA\in \MM^{n\times n} \colon Q(\dist^2(\cdot,SO(n))(\AAA)=0\big\}=SO(n),
$$
we immediately obtain that $\nabla u \in SO(n)$ a.e.\,in $\O$. Eventually, appealing to Theorem~\ref{t:CGP} we deduce that $u \in PR(\O)$, and hence the claim. 
\end{proof}
\begin{remark}\label{r:strong L2 conv}
{\rm
We observe that the family $(\tilde u_\e)$ exhibited in Proposition~\ref{p:domain} 
actually satisfies $\nabla \tilde u_\e \to \nabla u$ in $L^2(\O,\MM^{n\times n})$. 
Indeed, combining \eqref{en-bd} and \eqref{dist-to-0} gives 
$$
 \int_{\O}\big||\nabla \tilde u_\e|-\sqrt n\big|^2\dx \leq \frac{F_\e(u_\e,v_\e)}{k_\e \, \alpha \,\Phi(\delta)} \to 0\; \text{ as }\; \e \to 0
$$
and therefore 
$$
\lim_{\e \to 0}\int_{\O} |\nabla \tilde u_\e|^2\dx= n \mathcal L^n(\O) = \int_{\O} |\nabla u|^2\dx.
$$
The latter combined with the weak convergence $\nabla \tilde u_\e \wto \nabla u$ in $L^2(\O,\MM^{n\times n})$ 
yields the claim.

} 
\end{remark}

The next theorem establishes a $\Gamma$-convergence result for the functionals $F_\e$.

\begin{theorem}
\label{thm:G-conv} 
The family of functionals $(F_\e)$ defined in \eqref{Fe} $\Gamma(L^1(\O,\R^n)\times L^1(\O))$-converges 
to the functional $F \colon L^1(\O,\R^n)\times L^1(\O)\longrightarrow [0,+\infty]$ given by
\begin{equation}\label{F}
F(u,v):=\begin{cases}
2C_V \mathcal H^{n-1}(J_u) & \text{if $u \in PR(\O)$\; and }\; v=1\; \text{a.e. in }\; \O,
\cr
+\infty & \text{otherwise},
\end{cases}
\end{equation}
where $C_V:=2\int_0^1 \sqrt{V(s)}\,ds$. 
\end{theorem}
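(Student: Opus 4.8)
The proof of Theorem~\ref{thm:G-conv} splits into the two standard halves of a $\Gamma$-convergence statement: a \emph{lower bound} (the $\Gamma$-$\liminf$ inequality) and the construction of a \emph{recovery sequence} (the $\Gamma$-$\limsup$ inequality). For the lower bound I would start from a sequence $(u_\e,v_\e)\to(u,v)$ in $L^1(\O,\R^n)\times L^1(\O)$ with $\liminf_\e F_\e(u_\e,v_\e)<+\infty$. By Proposition~\ref{p:domain} we may assume $v=1$ a.e.\ and $u\in PR(\O)$, so the target value is finite. The key is to bound the Modica--Mortola part of $F_\e$ from below by the surface energy. The natural tool is the slicing method: fix a direction $\xi\in\Sn$, apply the one-dimensional sections of Theorem~\ref{slicing}, and use the elementary one-dimensional estimate that for a transition of $v_\e$ from near $0$ near $J_u$ to near $1$ away from it, $\int\big(V(v_\e)/\e+\e|v_\e'|^2\big)\ge 2\int\sqrt{V(v_\e)}|v_\e'|\ge 2C_V$ times the number of jump points of the limit slice $u_y^\xi$; here the factor $2$ and the relation between $C_V$ and the optimal-profile constant come from the Young-type inequality already used in the proof of Proposition~\ref{p:domain}. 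Integrating over $y\in\Pi^\xi$, using Fatou and the characterization $J_{u_y^\xi}=(J_u^\xi)_y^\xi$ together with \eqref{e:juxi}, and finally optimizing over (or averaging in) $\xi$ via the localization method of De Giorgi--Letta and a supremum over directions, yields $\liminf_\e F_\e(u_\e,v_\e)\ge 2C_V\,\mathcal H^{n-1}(J_u)$. The bulk term $k_\e\Phi(v_\e)W(x,\nabla u_\e)\ge 0$ is simply discarded in this inequality; its only role was already exploited in Proposition~\ref{p:domain} to force $u\in PR(\O)$.

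\textbf{Upper bound.} For the recovery sequence I would proceed by a two-step density-plus-construction argument, as announced in the introduction. First, by the density result of \cite{BCG} it suffices to build a recovery sequence when $u\in PR(\O)$ has the special form $u=\sum_{i=1}^N(\AAA_i x+b_i)\chi_{E_i}$ with $(E_i)_{i=1}^N$ a \emph{finite} Caccioppoli partition into \emph{polyhedral} sets, so that $J_u$ is (contained in) a finite union of $(n-1)$-dimensional polytopes; a diagonal argument together with the lower semicontinuity of $F$ and continuity of $\mathcal H^{n-1}$ along the approximating sequence then upgrades the construction to general $u\in PR(\O)$. Second, for such a polyhedral $u$, set $v=1$ and build $v_\e$ by the classical Modica--Mortola--Ambrosio--Tortorelli ansatz: in a tubular neighborhood of $J_u$ of width $O(\e)$, let $v_\e$ depend only on the (signed) distance to $J_u$ through the optimal one-dimensional profile associated with $V$, rescaled by $\e$, so that the Modica--Mortola energy of $v_\e$ is $2C_V\,\mathcal H^{n-1}(J_u)+o(1)$; away from that neighborhood put $v_\e\equiv1$. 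It remains to choose $u_\e\in W^{1,2}(\O,\R^n)$ with $u_\e\to u$ in $L^1$ so that the bulk term $\int_\O k_\e\Phi(v_\e)W(x,\nabla u_\e)\dx\to0$: on $\O$ minus the tube one simply sets $u_\e=u$ (which is affine, hence $\nabla u_\e\in SO(n)$ and $W(x,\nabla u_\e)=0$ there), and inside the tube one interpolates linearly across the at most $N$ polyhedral facets over a layer of width $\eta_\e\gg\e$ but $\eta_\e\to0$; the interpolation gradient is $O(1/\eta_\e)$, so $W(x,\nabla u_\e)=O(1/\eta_\e^2)$ on a set of measure $O(\eta_\e)$, and since on that layer $\Phi(v_\e)=\Phi(1)=1$ we pay $O(k_\e/\eta_\e)$; choosing $\eta_\e$ so that $k_\e/\eta_\e\to 0$ (possible since only $k_\e\to+\infty$ is assumed, e.g.\ $\eta_\e=\sqrt{k_\e}\,\e^{1/2}$ suitably, or simply $\eta_\e=\max\{k_\e^{-1/2},\e^{1/2}\}$, checking $\eta_\e\gg\e$) kills the bulk contribution. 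Alternatively, and more robustly, one can keep $u_\e=u$ exactly and instead modify $v_\e$ to vanish on the (small) set where $W(\nabla u)\ne0$ is unbounded; but since $u$ is piecewise affine, $\nabla u$ takes finitely many values in $SO(n)$, so in fact $W(x,\nabla u)\equiv0$ a.e., the bulk term is identically zero for the choice $u_\e\equiv u$, and no interpolation layer is needed at all. This is the simplest route: take $u_\e=u$, $v=1$, and $v_\e$ the rescaled optimal profile around the polyhedral $J_u$.

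\textbf{Main obstacle.} The delicate point is the lower bound, specifically passing from the one-dimensional slice estimates to the sharp $(n-1)$-dimensional constant $2C_V\,\mathcal H^{n-1}(J_u)$ with the correct multiplicity. One must handle the fact that the slices $v_\e$ need not be monotone, may oscillate, and that $J_u$ generally fails to be polyhedral, so the counting of transitions must be done through $\mathcal H^0(J_{u_y^\xi})\ge$ (number of sign changes of the relevant trace jump) and then integrated; the localization/blow-up method (De Giorgi--Letta measure-theoretic argument, or Fonseca--M\"uller-type lower semicontinuity) is the clean way to glue the directional estimates into the full surface integral, and care is needed because the bulk term, although nonnegative, must be genuinely discarded only \emph{after} Proposition~\ref{p:domain} has been invoked to guarantee $u\in PR(\O)$ — otherwise the $\liminf$ could be $+\infty$ trivially and the inequality is vacuous. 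A secondary technical nuisance in the upper bound is verifying the required continuity/approximation in the $\mathcal H^{n-1}(J_u)$ term along the polyhedral density sequence from \cite{BCG}, which is exactly what that density result is designed to provide, so it reduces to a careful but routine diagonalization.
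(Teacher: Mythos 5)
There is a genuine gap, and it is in the upper bound. Your ``simplest route'' $u_\e\equiv u$ is inadmissible: by \eqref{Fe} the functional $F_\e$ is finite only on $W^{1,2}(\O,\R^n)\times W^{1,2}(\O)$, and a piecewise-rigid $u$ with $\mathcal H^{n-1}(J_u)>0$ is a genuinely discontinuous $(G)SBV$ map, not a Sobolev function, so $F_\e(u,v_\e)=+\infty$ for every $\e$. Your first route (linear interpolation across the facets over a layer of width $\eta_\e$ placed where $v_\e=1$) fails arithmetically: the cost you yourself compute is of order $k_\e/\eta_\e$, and since $\eta_\e\to0$ while $k_\e\to+\infty$, this diverges for \emph{every} admissible choice of $\eta_\e$ (e.g.\ $\eta_\e=\sqrt{k_\e}\,\e^{1/2}$ gives $k_\e/\eta_\e=\sqrt{k_\e/\e}\to+\infty$); moreover the bound $W(x,\nabla u_\e)=O(1/\eta_\e^2)$ uses an upper growth condition on $W$ that is not assumed. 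The idea you are missing is that the modification of $u$ must be confined to the region where the phase field \emph{vanishes}: one first forces $v_\e\equiv0$ on a slab $A^\e$ of thickness $\xi_\e\ll\e$ around $J_u$ (this adds only $V(0)\,\xi_\e\e^{-1}\,\mathcal H^{n-1}(J_u)+o(1)\to0$ to the Modica--Mortola term), and then sets $u_\e=(1-\varphi_\e)u$ with $\varphi_\e$ a cutoff supported in $A^\e$ and equal to $1$ near $J_u$; then $u_\e\in W^{1,\infty}(\O,\R^n)$ and the bulk term is \emph{identically zero}, because $\Phi(v_\e)=\Phi(0)=0$ wherever $\nabla u_\e\notin SO(n)$. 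This is precisely why no quantitative relation between $k_\e$ and $\e$ is needed. (Secondary but necessary: with several facets one must take the minimum of the profiles built on each hyperplane and check that the overlaps of the corresponding tubes carry energy $O(\e)$.)

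Your lower bound is in the right spirit --- it is the direct-slicing variant of the paper's coarea/perimeter-of-sublevel-sets argument, and either route works --- but it asserts without proof the one fact that carries the whole estimate: that near each jump point of the limit slice $u_y^\xi$ the slices of $v_\e$ actually descend to values arbitrarily close to $0$. This is false for a generic sequence with bounded Modica--Mortola energy; it is exactly here that the bulk term must be used \emph{again}, not only inside Proposition \ref{p:domain}. If $\inf_I (v_\e)_y^\xi\ge s_h/2>0$ on an interval $I$ around a jump point, then \eqref{e:lb-W} together with the coercivity $\dist^2(\AAA,SO(n))\ge\big[\tfrac12|\AAA\xi|^2-n\big]_+$ and $k_\e\to+\infty$ bounds $\int_I|((u_\e)_y^\xi)'|^2\,dt$ uniformly, so the limit slice would belong to $W^{1,1}(I,\R^n)$, contradicting the presence of the jump. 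Hence your remark that the bulk term's ``only role was already exploited in Proposition \ref{p:domain}'' is incorrect. Once this step is supplied, the rest of your scheme (Young's inequality, Fatou in $y$, the identity $\int_{\Pi^\xi}\Ho(J_{u_y^\xi})\,d\mathcal H^{n-1}=\int_{J_u^\xi}|\nu_u\cdot\xi|\,d\mathcal H^{n-1}$, and the supremum over a dense set of directions via superadditivity) does yield $2C_V\,\mathcal H^{n-1}(J_u)$.
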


\begin{proof}
We divide the proof into two steps. 

\medskip

\noindent \emph{Step 1: Ansatz-free lower bound.} 
Let $(u,v) \in L^1(\O,\R^n)\times L^1(\O)$ be arbitrary; we need to show that
for all sequences $(u_\e,v_\e) \to (u,v)$ in $L^1(\O,\R^n)\times L^1(\O)$, $0\leq v_\e \leq 1$ a.e.\ in $\O$, we have
\begin{equation}\label{lb}
\liminf_{\e\to 0}F_\e(u_\e,v_\e) \geq F(u,v). 
\end{equation}
Without loss of generality, up to the extraction of a subsequence, we may assume that the liminf in \eqref{lb} is a limit; therefore we have
\begin{equation}\label{bounded}
\sup_\e F_\e(u_\e,v_\e)<+\infty
\end{equation}
Then Proposition \ref{p:domain} 
readily implies that $u \in PR(\O)$ and $v=1$ a.e. in $\O$.  

To prove \eqref{lb} we start noticing that by \eqref{super-level-set} and the Fatou Lemma we have
\[
\liminf_{\e \to 0} F(u_\e,v_\e)\geq 2\int_\delta^{1-\delta}\sqrt{V(s)}\,
\liminf_{\e \to 0}\mathcal{H}^{n-1}(\partial^*\{v_\e>s\})\,ds\,,
\]
then, to conclude it suffices to show that 
\begin{equation}\label{e:surfaceinq}
\liminf_{\e\to 0}\mathcal{H}^{n-1}(\partial^*\{v_\e>s\})
\geq 2\mathcal{H}^{n-1}(J_u)
\end{equation}
for $\mathcal{L}^1$-a.e.\ $s\in(\delta,1-\delta)$, and then let $\delta\to 0^+$.

The estimate in \eqref{e:surfaceinq} can be obtained via slicing similarly as in \cite{focardi, focardi_tesi}. 
Specifically, fix $s\in(\delta,1-\delta)$ for which the left-hand side of 
\eqref{e:surfaceinq} is finite, and set $\O_\e:=\{v_\e<s\}$; we notice that $\mathcal L^n(\O_\e)\to 0$, as $\e\to 0$.
We now claim that for every open subset $U\subset \Omega$ and every
$\xi\in\Sn$ we have
\begin{equation}\label{e:percut}
\liminf_{\e\to 0}\mathcal{H}^{n-1}(J_{\chi_{\O_\e}}\cap U)
\geq 2\int_{\pixi(U)}\Ho(J_{\uxiy}\cap U)d\mathcal{H}^{n-1},
\end{equation}
for $\mathcal{H}^{n-1}$-a.e.\ $y\in \pixi(U)$.
Assume for the moment that \eqref{e:percut} holds true, then the co-area formula for rectifiable sets \cite[Theorem~2.93]{AFP00} 
yield the following lower-semicontinuity estimate
\begin{align}\label{e:percut20}
\liminf_{\e\to 0}\mathcal{H}^{n-1}(\partial^*\{v_\e>s\}\cap U)
&=\liminf_{\e\to 0}\mathcal{H}^{n-1}(J_{\chi_{\O_\e}}\cap U)\notag
\\
&\geq 2\int_{\pixi(U)}\Ho(J_{\uxiy}\cap U)d\mathcal{H}^{n-1}\notag
\\
&=2\int_{J^{\xi}_u\cap U}|\nu_u\cdot\xi|d\mathcal{H}^{n-1}.
\end{align}
Thanks to \eqref{e:juxi} for $\mathcal{H}^{n-1}$-a.e.\ $\xi\in\Sn$ we have $\mathcal{H}^{n-1}(J_u\setminus J_u^\xi)=0$, therefore from \eqref{e:percut20} we also infer
\begin{equation}\label{e:percut2}
\liminf_{\e\to 0}\mathcal{H}^{n-1}(\partial^*\{v_\e>s\}\cap U)
\geq 2\int_{J_u\cap U}|\nu_u\cdot\xi|d\mathcal{H}^{n-1}.
\end{equation}
Then, \eqref{e:surfaceinq} follows from \eqref{e:percut2}
passing to the supremum on a dense sequence $\left(\xi_j\right)$ in ${\mathbb S}^{n-1}$
and invoking \cite[Lemma 2.35]{AFP00}, also noticing that the function
\[
U\mapsto\liminf_{\e\to 0}\mathcal{H}^{n-1}(\partial^*\{v_\e>s\}\cap U)
\] 
is superadditive on pairwise disjoint open subsets of $\O$.

Hence we are now left to prove \eqref{e:percut}. To this end we start observing that for every $\AAA\in\MM^{n\times n}$ it holds
\[
\dist^2(\AAA,SO(n))\geq ||\AAA|-\sqrt{n}|^2\geq\frac12|\AAA|^2-n\,.
\]
Therefore, for every $\AAA\in\MM^{n\times n}$ and every $\xi\in\mathbb{S}^{n-1}$ we get
\begin{equation}\label{e:dist coerc}
\dist^2(\AAA,SO(n))\geq\Big[\frac12|\AAA\xi|^2-n\Big]_+ \,,
\end{equation}
where $[t]_+$ denotes the positive part of $t\in\R$. 

In view of \eqref{e:lb-W}, \eqref{bounded}, \eqref{e:dist coerc} and by the Fatou Lemma we can find a subsequence 
$(u_{\e_j},v_{\e_j})$ of $(u_\e,v_\e)$ 
such that 
\begin{equation}\label{numsal}
\liminf_{\e\to 0}\mathcal{H}^{n-1}(J_{\chi_{\O_\e}}\cap U)=\lim_{j\to+\infty}\mathcal{H}^{n-1}(J_{\chi_{\O_{\e_j}}}\cap U),\\
\end{equation}
and for $\mathcal{H}^{n-1}$-a.e.\ $y\in\pixi(\Omega)$
\begin{equation}\label{e:sliceconv}
\big((u_{\e_j})^\xi_y,(v_{\e_j})^\xi_y\big) \to (\uxiy,1)\;
\text{ in }\; L^1(\Omega^\xi_y,\R^n) \times L^1(\Omega^\xi_y),
\end{equation}
and 
\begin{align}\label{e:fette}
\liminf_{j\to+\infty}\bigg(
\int_{U^\xi_y}&\bigg(\alpha \, k_{\e_j}(v_{\e_j})^\xi_y\Big[\frac12\big(((u_{\e_j})^\xi_y)'\big)^2-n\Big]_+ 
+\frac {V\big((v_{\e_j})^\xi_y\big)}{\e_j}
+\e_j\big(((v_{\e_j})^\xi_y)'\big)^2\bigg)\,dt\notag\\
&+\Ho(J_{\chi_{(\O_{\e_j})^\xi_y}}\cap U)\bigg)\leq c<\infty,
\end{align}
for some constant $c>0$ (which may depend on $y$). 

Let $y\in\pixi(\Omega)$ be fixed and such that both \eqref{e:sliceconv} and \eqref{e:fette} are satisfied; moreover assume that 
\[
\Ho\big(J_{\uxiy}\cap U\big)>0. 
\]
Let $\left\{t_1,...,t_l\right\}$ be an arbitrary subset of $J_{\uxiy}\cap U$, 
and let $(I_i)_{1\leq i\leq l}$ be a family of pairwise disjoint open intervals 
such that $t_i\in I_i$, $I_i\subset \subset U^\xi_y$. 
Then, for every  $1\leq i\leq l$, we have
\begin{equation}\label{e:v-vanish}
s_i:=
\liminf_{j\to+\infty}\big(\inf_{I_i}(v_{\e_j})^\xi_y\big)=0.
\end{equation}
Indeed, if $s_h>0$ for some $h\in\{1,...,l\}$, then
for $j$ sufficiently large we would get
\[
\inf_{I_h}(v_{\e_j})^\xi_y\geq {\frac {s_h}  2}\,,
\]
thus \eqref{e:fette} would give 
\[
\int_{I_h}\big(((u_{\e_j})^\xi_y)'\big)^2dt\leq
2\int_{I_h}\Big[\frac12\big(((u_{\e_j})^\xi_y)'\big)^2-n\Big]_+dt+2n\mathcal{L}^1(I_h)\leq
\frac{2c}{\alpha k_{\e_j}s_h}+2n\mathcal{L}^1(I_h)\,.
\]
Hence, Rellich-Kondrakov's Theorem and \eqref{e:sliceconv} 
would imply that the slice $\uxiy$ belongs to $W^{1,1}(I_h,\R^n)$, thus
contradicting the assumption $\Ho\left(J_{\uxiy}\cap I_h\right)>0$.
Then, thanks to \eqref{e:sliceconv} and \eqref{e:v-vanish} we can find $(r_j^i)\subset I_i$ such that 
\[
\lim_{j\to+\infty} (v_{\e_j})^\xi_y(r_j^i)=0
\]
and $r'_i$, $r''_i\in I_i$, with $r_i<r_j^i<r''_i$ satisfying  
\[
\lim_{j\to+\infty}  (v_{\e_j})^\xi_y\left(r'_i\right) =
\lim_{j\to+\infty}  (v_{\e_j})^\xi_y\left(r''_i\right) =1\,,
\]
which readily gives 
\[
\liminf_{j\to+\infty}\Ho(J_{\chi_{(\O_{\e_j})^\xi_y}}\cap I_i)\geq 2.
\] 
Eventually, the subadditivity of the liminf and the arbitrariness of $l$ yield
\[
\liminf_{j\to+\infty}\Ho(J_{\chi_{(\O_{\e_j}^{})^\xi_y}}\cap U) \geq 2\Ho(J_{\uxiy}\cap U)\,,
\]
so that \eqref{e:percut} follows by integrating the previous inequality on $\pi_\xi(U)$ and using the Fatou Lemma. 
\medskip 

\noindent \emph{Step 2: Existence of a recovery sequence.}  
Let $(u,v) \in L^1(\O,\R^n)\times L^1(\O)$ be arbitrary, in this step we will construct a sequence $(u_\e,v_\e) \to (u,v)$ in $L^{1}(\O,\R^n)\times L^1(\O)$ such that
\begin{equation}\label{ub}
\limsup_{\e\to 0}F_\e(u_\e,v_\e) \leq F(u,v).
\end{equation}
We start by noticing that the inequality in \eqref{ub} is trivial unless we additionally assume that $u \in PR(\O)$ and $v=1$ a.e. in $\O$. Therefore, in particular
we can write $u$ as
\begin{equation}\label{ub:Cp}
u(x)=\sum_{i\in \N}(\AAA_i x+b_i) \chi_{E_i}(x),
\end{equation}
where $\AAA_i \in SO(n)$, $b_i \in \R^n$ for every $i\in \N$, and 
$(E_i)$ is Caccioppoli partition of $\O$. 

By standard density and continuity arguments (cf.\ \cite[Remark 1.29]{Braides}) we notice that it is enough to prove \eqref{ub} in a subset $X$ of $PR(\O)$, which
is dense in $PR(\O)$ in the following sense: for every $u\in PR(\O)$ there exists $(u_j) \subset X$
such that
\begin{equation}\label{density}
u_j \to u\;  \text{ in } \; L^1(\O,\R^n)\; \text{ and }\; \mathcal H^{n-1}(J_{u_j}) \to \mathcal H^{n-1}(J_{u}),   
\end{equation}
for $j \to +\infty$. 

We now claim that $X$ is given by those $u \in PR(\O)$ of the form
\begin{equation}\label{approx}
u(x)=\sum_{i=1}^{N}(\widehat \AAA_i x+\hat b_i) \chi_{\widehat E_i}(x),
\end{equation}
where $\widehat \AAA_i \in SO(n)$, $\hat b_i \in \R^n$, and $\widehat E_i$ is a polyhedral set, for every $i=1,\ldots, N$. 
Indeed given $u$ as in \eqref{ub:Cp} the sequence $(u_N)$ defined as
$$
u_N(x)=\sum_{i=1}^{N-1}(\AAA_i x+ b_i) \chi_{E_i}(x) + 
(\AAA_1x+ b_1) \chi_{\O\setminus \bigcup_{i}^{N-1} E_i},
$$
clearly satisfies $u_N \to u$ in $L^1(\O,\R^n)$, as $N\to +\infty$. Moreover, by lower semicontinuity we have that $\mathcal H^{n-1}(J_u)\leq \liminf_N \mathcal H^{n-1}(J_{u_N})$, 
hence since $\mathcal  H^{n-1}(J_{u_N}) \leq \mathcal H^{n-1}(J_{u})$ for every $N \in \N$, we obtain
$$
\mathcal  H^{n-1}(J_{u_N}) \to \mathcal H^{n-1}(J_{u}),
$$
as $N\to +\infty$.

Further, given the finite partition of $\O$ into sets of finite perimeter $E'_1, \ldots, E_N'$, with $E_i':=E_i$ for $i=1,\ldots,N-1$ and $E'_N:=\O \setminus \bigcup_{i}^{N-1} E_i$, we can invoke \cite[Corollary 2.5]{BCG} to deduce the existence of a partition of $\O$ into polyhedral sets $\widehat E^j_1, \ldots, \widehat E^j_N$ such that, for every $i=1, \ldots, N$, 
$$
\mathcal L^n(\widehat E_i^j \triangle E'_i ) \to 0 \quad \text{and} \quad \mathcal H^{n-1} (\partial^*\widehat E_i^j ) \to \mathcal H^{n-1} (\partial^*E'_i ),  
$$ 
as $j \to +\infty$. Eventually, the desired sequence $(u_j)$ satisfying \eqref{density}-\eqref{approx} can be obtained by a standard diagonal argument.  

\medskip 

We now construct a recovery sequence $(u_\e,v_\e) \subset W^{1,2}(\O,\R^n) \times W^{1,2}(\O)$ for $F_\e$ when $u$ is as in \eqref{approx}. Therefore we have that, in particular, up to a set of zero $\mathcal H^{n-1}$-measure
\begin{equation}\label{e:jump-u}
J_u = \bigcup_{i=1}^M S_i \cap \O,  
\end{equation}
where $S_1, \ldots, S_M \subset \R^n$ are a finite number of $(n-1)$-dimensional simplexes. 

For every $i\in \{1,\ldots, M\}$ we denote with $\Pi_i$ the $(n-1)$-dimensional hyperplane containing the simplex $S_i$; we have that $\Pi_i \neq \Pi_\ell $, for $i\neq \ell$. 

We start by constructing $v_\e$. To this end, we recall that 
\begin{equation}\label{e:double-min}
C_V = 2 \int_0^1\sqrt{V(s)}\ds= \inf_{T>0} \min \bigg\{\int_0^T (V(w)+|w'|^2)\dt \colon w \in \mathcal A(0,T)\bigg\}; 
\end{equation}
where
\[
\mathcal A(0,T):=\{w\in W^{1,\infty}(0,T)\colon 0\leq w\leq 1,\,  w(0)=0,\, w(T)=1\},
\] 
(see e.g.\ \cite[Remark 6.1]{Braides})
Hence for every fixed $\eta >0$ there exists $T_\eta >0$ and $w_\eta\in W^{1,\infty}(0,T_\eta)$ with $0\leq w_\eta \leq 1$, $w_\eta(0)=0$ and $w_\eta(T_\eta)=1$ such that 
\begin{equation}\label{e:w-eta}
\int_0^{T_\eta} \big(V(w_\eta)+|w_\eta'|^2\big)\dt \leq C_V +\eta. 
\end{equation}
Let now $0<\xi_\e \ll \e$ and define the function $h_\e \colon [0,+\infty) \to [0,1]$ as
\begin{equation}\label{e:h-e}
h_\e(t):=\begin{cases} 
0 & \text{if }\; 0 \leq t \leq \xi_\e,
\cr
w_\eta \Big(\frac{t-\xi_\e}{\e}\Big) & \text{if }\; \xi_\e \leq t \leq \xi_\e + \e T_\eta,
\cr
1 & \text{if }\; t \geq \xi_\e + \e T_\eta.
\end{cases}
\end{equation}
Let $\pi_i \colon \R^n \to \Pi_i$ denote the orthogonal projection onto $\Pi_i$ and set $d_i(x):=\dist(x,\Pi_i)$, we notice that 
\[
\nabla d_i (x)=\frac{x -\pi_i(x)}{|x -\pi_i(x)|},
\]
for every $x\in \R^n \setminus \Pi_i$. 

Moreover, for every $\d>0$ we define 
\[
S_i^\delta :=\{y \in \Pi_i \colon \dist(y, S_i)\leq \d\}. 
\]
Now let $\gamma_\e^i$ be a cut-off function between $S_i^\e$ and $S_i^{2\e}$\ie $\gamma_\e^i \in C^\infty_0(S_i^{2\e})$, $0 \leq \gamma_\e^i \leq 1$, $\gamma_\e^i \equiv 1$ in $S_i^\e$, and $|\nabla \gamma_\e^i| \leq c/\e$ in $\Pi_i$, for some $c>0$. For every $i \in \{1,\ldots, M\}$ set
\begin{equation}\label{e:v-e-i}
v_\e^i(x):=\gamma_\e^i(\pi_i(x))h_\e(d_i(x))+1-\gamma_\e^i(\pi_i(x)).
\end{equation}
From the very definition of $v_\e^i$ we have that $0\leq v_\e^i \leq 1$ and $v_\e^i \in W^{1,\infty}(\R^n)$. Moreover, by using the following facts: $|\nabla \gamma_\e^i| \leq c/\e$, $\pi_i$ is Lipschitz with constant $1$, and $|\nabla d_i|=1$, we also get  
\begin{equation}\label{e:L-infty-bd}
\|\nabla v^i_\e\|_{L^\infty(\R^n)} \leq \frac{c}{\e}. 
\end{equation}
Additionally, by definition, $v_\e^i \to 1$ in $L^1_{\rm loc}(\R^n)$, and 
\begin{equation}\label{e:v-e-i-i}
v_\e^i \equiv 0\; \text{ in }\; A_i^\e\; \text{ and } \; v_\e^i \equiv 1\; \text{ in }\; \R^n \setminus B_i^\e,
\end{equation}
where 
\[
A_i^\e:=\{x \in \R^n \colon \pi_i(x)\in S_i^\e \text{ and } d_i(x)\leq \xi_\e\}
\]
and 
\[
B_i^\e:=\{x \in \R^n \colon \pi_i(x)\in S_i^{2\e} \text{ and } d_i(x)\leq \xi_\e+ \e T_\eta\}.
\]
Therefore, in view of \eqref{e:v-e-i-i} we have 
\begin{equation}\label{e:en-v-i}
\int_{B_i^\e} \bigg(\frac{V(v_\e^i)}{\e}+\e |\nabla v_\e^i|^2 \bigg)\dx = \frac{V(0)}{\e}\, \mathcal L^n(A_i^\e)
+ \int_{B_i^\e \setminus A_i^\e} \bigg(\frac{V(v_\e^i)}{\e}+\e |\nabla v_\e^i|^2 \bigg)\dx
\end{equation}
moreover, we notice that 
\begin{equation}\label{e:en-v-0}
\lim_{\e \to 0}\frac{V(0)}{\e}\, \mathcal L^n(A_i^\e)= 2 V(0) \, \lim_{\e \to 0}\frac{\xi_\e}{\e}\,\mathcal H^{n-1}(S_i^\e)=0,
\end{equation}
where to establish the last equality we have used that $\xi_\e \ll \e$ and $\mathcal H^{n-1}(S_i^\e) \to \mathcal H^{n-1}(S_i)$, as $\e \to 0$. We now estimate the second term in the right-hand side of \eqref{e:en-v-i}. To do so it is convenient to write 
\[
B_i^\e \setminus A_i^\e=H_i^\e \cup I_i^\e
\] 
where
\begin{equation}\label{e:H-i}
H_i^\e:=\{x\in \R^n \colon \pi_i(x)\in S_i^\e \text{ and } \xi_\e \leq d_i(x)\leq \xi_\e +\e T_\eta\}
\end{equation}
and 
\begin{equation}\label{e:I-i}
I_i^\e:=\{x\in \R^n \colon \pi_i(x)\in S_i^{2\e} \setminus S_i^\e \text{ and } d_i(x)\leq \xi_\e +\e T_\eta\}.
\end{equation}
By definition of $v_\e^i$, in the set $H_i^\e$ it holds 
\[
v_\e^i(x)=w_\eta \bigg(\frac{d_i(x)-\xi_\e}{\e}\bigg)
\]
and thus 
\[
\nabla v_\e^i(x) = \frac{1}{\e}w_\eta ' \bigg(\frac{d_i(x)-\xi_\e}{\e}\bigg) \nabla d_i(x). 
\] 
Therefore, since $|\nabla d_i|=1$ a.e.,\ we have 
\begin{eqnarray}\nonumber 
\int_{H_i^\e} \bigg(\frac{V(v_\e^i)}{\e}+\e |\nabla v_\e^i|^2 \bigg)\dx &=& \int_{H_i^\e} \bigg(\frac{1}{\e}V\Big(w_\eta \bigg(\frac{d_i(x)-\xi_\e}{\e}\bigg)\bigg)+\e \bigg|\frac{1}{\e}w_\eta ' \bigg(\frac{d_i(x)-\xi_\e}{\e}\bigg) \nabla d_i(x)\bigg|^2 \bigg)\dx
\\\nonumber
&=& 2 \int_{S_i^\e} \,d\mathcal H^{n-1} \int_{\xi_\e}^{\xi_\e +\e T_\eta} \bigg(\frac{1}{\e}V\Big(w_\eta \Big(\frac{t-\xi_\e}{\e}\Big)\bigg)+\frac{1}{\e} \Big|w_\eta ' \Big(\frac{t-\xi_\e}{\e}\Big)\Big|^2 \bigg)\dt
\\\nonumber 
&=& 2 \int_{S_i^\e} \,d\mathcal H^{n-1} \int_{0}^{T_\eta} \big(V(w_\eta(t))+|w_\eta ' (t)|^2 \big)\dt
\\\label{e:est-en-v-i-1}
&\leq & 2 (C_V+\eta)\, \mathcal H^{n-1} (S_i) + o(1),
\end{eqnarray}
as $\e\to 0$, where to establish the last inequality we have used \eqref{e:w-eta}. 

Furthermore, from \eqref{e:L-infty-bd} it is immediate to show that
\begin{equation}\label{e:est-en-v-i-2}
\lim_{\e \to 0}\int_{I_i^\e} \bigg(\frac{V(v_\e^i)}{\e}+\e |\nabla v_\e^i|^2 \bigg)\dx \leq \lim_{\e \to 0} \frac{c}{\e}\, \e \, \mathcal H^{n-1}(S_i^{2\e} \setminus S_i^\e)=0.  
\end{equation}
Eventually, gathering \eqref{e:en-v-i}-\eqref{e:est-en-v-i-2} yields
\begin{equation*}
\int_{B_i^\e} \bigg(\frac{V(v_\e^i)}{\e}+\e |\nabla v_\e^i|^2 \bigg)\dx \leq 2 (C_V+\eta)\, \mathcal H^{n-1} (S_i) + o(1),
\end{equation*}
as $\e \to 0$. 

Now the idea is to combine together the sequences $(v_\e^i)$ in order to define a new sequence $(v_\e)$ which belongs to $W^{1,2}(\O)$ 
and in every $B_i^\e$ coincides with $(v_\e^i)$, up to a set where the surface energy is negligible. Moreover the sequence $(v_\e)$ shall satisfy: 
$v_\e \to 1$ in $L^1(\O)$ and 
\begin{equation}\label{e:claim-v-e}
\limsup_{\e \to 0}\int_{\O} \bigg(\frac{V(v_\e^i)}{\e}+\e |\nabla v_\e^i|^2 \bigg)\dx \leq 2 (C_V+\eta)\, \mathcal H^{n-1} (J_u),
\end{equation}
where $u$ is as in \eqref{approx}. 

To this end, we define 
\begin{equation}\label{e:v-e}
v_\e:=\min\{v_\e^1,\ldots, v_\e^M\};
\end{equation}
clearly, $0\leq v_\e \leq 1$, $(v_\e) \subset W^{1,2}(\O)$, and $v_\e \to 1$ in $L^1(\O)$ hence, in particular, $v_\e \to 1$ in $L^1(\O)$. Further, setting 
\[
A^\e:= \bigcup_{i=1}^M A_i^\e\; \text{ and }\; B^\e:= \bigcup_{i=1}^M B_i^\e,
\] 
by \eqref{e:v-e-i-i} and \eqref{e:v-e} we readily deduce that
\begin{equation}\label{e:v-e-c}
v_\e \equiv 0\; \text{ in }\; A^\e\; \text{ and } \; v_\e \equiv 1\; \text{ in }\; \R^n \setminus B^\e.
\end{equation}
Then, writing $\O= (\O \setminus B^\e) \cup (\O \cap (B^\e \setminus A^\e)) \cup (\O \cap A^\e)$, in view of \eqref{e:v-e-c} we get
\begin{eqnarray}\nonumber
\int_{\O} \bigg(\frac{V(v_\e^i)}{\e}+\e |\nabla v_\e^i|^2 \bigg)\dx &\leq& \int_{\O \cap (B^\e \setminus A^\e)} \bigg(\frac{V(v_\e^i)}{\e}+\e |\nabla v_\e^i|^2 \bigg)\dx\\\label{e:resto}
&+& 2 V(0) \, \frac{\xi_\e}{\e}\,\sum_{i=1}^M\mathcal H^{n-1}(S_i^\e).
\end{eqnarray}
Since $\xi_\e \ll \e$ and 
\[
\sum_{i=1}^M\mathcal H^{n-1}(S_i^\e)\; \to \; \sum_{i=1}^M\mathcal H^{n-1}(S_i)= \mathcal H^{n-1}(J_u),
\] 
as $\e \to 0$, the second term in the right hand side of \eqref{e:resto} is negligible. Hence, to get \eqref{e:claim-v-e} we are left to estimate the surface energy in $\O \cap (B^\e \setminus A^\e)$. We claim that
\begin{equation}\label{e:claim-S}
\limsup_{\e \to 0} \mathcal S_\e:= \limsup_{\e \to 0} \int_{\O \cap (B^\e \setminus A^\e)} \bigg(\frac{V(v_\e)}{\e}+\e |\nabla v_\e|^2 \bigg)\dx \leq 2 (C_V+\eta)\, \mathcal H^{n-1} (J_u). 
\end{equation}
We notice that 
\[
B^\e \setminus A^\e = \bigcup_{i=1}^M (H_i^\e \cup I_i^\e)\setminus A^\e,
\]
where the sets $H_i^\e$ and $I_i^\e$ are defined as in \eqref{e:H-i} and \eqref{e:I-i}, respectively.  Since moreover
\[
(H_i^\e \cup I_i^\e)\setminus A^\e \subset \bigcup_{j\neq i} \Big((H_i^\e \cup I_i^\e)\cap (H_j^\e \cup I_j^\e) \Big)\cup \bigcap_{j \neq i} \Big((H_i^\e \cup I_i^\e) \setminus B_j^\e \big),
\]
we have 
\begin{eqnarray}\nonumber 
\mathcal S_\e &\leq& \sum_{i=1}^M \int_{\O \cap ((H_i^\e \cup I_i^\e)\setminus A^\e)} \bigg(\frac{V(v_\e)}{\e}+\e |\nabla v_\e|^2 \bigg)\dx
\\\nonumber 
&\leq&  \sum_{i=1}^M \int_{\O \cap \bigcap_{j \neq i} \big((H_i^\e \cup I_i^\e) \setminus B_j^\e) \big)} \bigg(\frac{V(v_\e)}{\e}+\e |\nabla v_\e|^2 \bigg)\dx
\\\nonumber 
&+& \sum_{i=1}^M \int_{\O \cap \bigcup_{j\neq i} \big((H_i^\e \cup I_i^\e)\cap (H_j^\e \cup I_j^\e) \big)} \bigg(\frac{V(v_\e)}{\e}+\e |\nabla v_\e|^2 \bigg)\dx
\\\label{e:S-1-S-2}
&=: & \mathcal S_\e^1 +\mathcal S_\e^2.
\end{eqnarray}
We now estimate the terms $\mathcal S_\e^1$  and $\mathcal S_\e^2$ separately. To this end, we start observing that 
\[
\bigcap_{j \neq i} \Big((H_i^\e \cup I_i^\e) \setminus B_j^\e \big) \subset \bigcap_{j \neq i}\{x \in \R^n \colon v_\e^i (x) \leq v_\e^j (x)\}, 
\]
hence, invoking \eqref{e:est-en-v-i-1} and \eqref{e:est-en-v-i-2}, we readily get
\begin{eqnarray}\nonumber
\mathcal S_\e^1 &=& \sum_{i=1}^M \int_{\O \cap \bigcap_{j \neq i} \big((H_i^\e \cup I_i^\e) \setminus B_j^\e) \big)} \bigg(\frac{V(v^i_\e)}{\e}+\e |\nabla v^i_\e|^2 \bigg)\dx 
\\\nonumber
&\leq& (2C_V +\eta)\sum_{i=1}^M \mathcal H^{n-1}(S_i) + o(1)
\\\label{e:S-1}
&=&(2C_V +\eta)\, \mathcal H^{n-1}(J_u) + o(1),
\end{eqnarray}
as $\e \to 0$.
Moreover, appealing to \eqref{e:L-infty-bd} easily gives
\begin{equation}\label{e:estimate-S2}
\mathcal S_\e^2 \leq \sum_{i=1}^M \sum_{j\neq i} \frac{c}{\e}\, \mathcal L^n\big((H_i^\e \cup I_i^\e)\cap (H_j^\e \cup I_j^\e) \big). 
\end{equation}
We now claim that 
\begin{equation}\label{e:van-meas}
\lim_{\e \to 0} \frac{c}{\e}\, \mathcal L^n\big((H_i^\e \cup I_i^\e)\cap (H_j^\e \cup I_j^\e) \big)=0,
\end{equation}
for every $i,j \in \{1, \ldots, M\}$. Indeed, since $\Pi_i\neq \Pi_j$ then the set $S_i \cap S_j$ is contained in an $(n-2)$-dimensional affine subspace of $\R^n$, so that by \eqref{e:H-i} and \eqref{e:I-i} we can deduce that   
\begin{equation}\label{e:van-meas-1}
\mathcal L^n\big((H_i^\e \cup I_i^\e)\cap (H_j^\e \cup I_j^\e) \big) \leq c (\xi_\e +\e T_\eta)^2= O(\e^2),
\end{equation}
as $\e \to 0$, where the constant $c>0$ depends only on the angle between $\Pi_i$ and $\Pi_j$ and on $\mathcal H^{n-2}(S_i \cap S_j)$.
Hence, \eqref{e:van-meas-1} immediately yields \eqref{e:van-meas}. 

Finally, gathering \eqref{e:S-1-S-2} and \eqref{e:S-1} entails \eqref{e:claim-S}, as desired. 

Therefore, to conclude the proof of the upper bound we now have to exhibit a sequence $(u_\e) \subset W^{1,2}(\O,\R^n)$ such that $u_\e \to u$ in $L^1(\O,\R^n)$ and  
\begin{equation}\label{e:ls-volume}
\limsup_{\e \to 0} \int_{\O} k_\e \Phi(v_\e)\, W(x,\nabla u_\e)\dx =0.
\end{equation}
To this end, set 
\[
(A^\e)':= \bigcup_{i=1}^M \Big\{x \in \R^n \colon \pi_i(x)\in S_i^{\e/2} \text{ and } d_i(x)\leq \frac{\xi_\e}{2}\Big\},
\]
let $\varphi_\e \in C^\infty_0(A^\e)$ be a cut-off function between $(A^\e)'$ and $A^\e$, and define
\[
u_\e:=(1-\varphi_\e)u. 
\]
Then, clearly $(u_\e)\subset W^{1,\infty}(\O,\R^n)$, moreover $u_\e \to u$ in $L^1(\O,\R^n)$. Moreover, since $v_\e \equiv 0$ in $A^\e$, and $\Phi$ vanishes at zero, it holds
\[
\int_{\O} k_\e \Phi(v_\e)\, W(x,\nabla u_\e)\dx= \int_{\O\setminus A^\e} k_\e \Phi(v_\e)\, W(x,\nabla u)\dx
\]
hence using that $u\in PR(\O)$ together with the fact that for every $x\in \O$ the function $W(x,\cdot)$ vanishes in $SO(n)$ we immediately get
\[
\Phi(v_\e)\, W(x,\nabla u)=0 \; \text{ a.e.\ in $\O \setminus A^\e$}
\]
and hence the claim. 
\end{proof}

\begin{remark}[Approximation of inhomogeneous anisotropic perimeter functionals]\label{r:inhomogeneous anisotropic}

{\rm Arguing \\ as in the proof of \cite{Vic} (see also \cite[Theorem 3.1]{focardi}),
in view of Proposition \ref{p:domain} one can establish a $\Gamma$-convergence result for functionals of the form
\begin{equation}\label{Fe-an}
F^\phi_\e(u,v):=\begin{cases}
\ds\int_{\O}\bigg(k_\e \Phi(v)\,W(x,\nabla u)+\frac{V(v)}{\e}
+\e\phi^2(x,\nabla v)\bigg)\dx & \hspace{-2mm} (u,v)\in W^{1,2}(\O,\R^n)\times W^{1,2}(\O),
\cr 
 & 0\leq v\leq 1 \; \text{ a.e. on $\O$,}\; 
\cr
+\infty & \text{otherwise,}
\end{cases}
\end{equation} 
where the euclidean norm in $F_\e$ is now replaced by a Finsler norm $\phi$. That is, $\phi \colon \O \times \R^n \to [0,+\infty)$ is a continuous function which is convex in its second variable and satisfies the two following properties:  
\begin{itemize}
\item[i.] for every $(x,z)\in \O\times \R^n$ and for every $t\in \R$
\[
\phi(x,tz)=|t|\phi(x,z);
\]
\item[ii.] for every $(x,z)\in \O\times \R^n$ there exist $0<m\leq M<+\infty$ such that
\[
m |z| \leq \phi(x,z) \leq M |z|.
\]
\end{itemize}
In this case it can be proven that the family of functionals $(F^\phi_\e)$ $\Gamma(L^1(\O,\R^n)\times L^1(\O))$-converges 
to the following inhomogeneous and anisotropic functional $F^\phi \colon L^1(\O,\R^n)\times L^1(\O)\longrightarrow [0,+\infty]$ defined on piecewise rigid maps as:
\begin{equation*}
F^\phi(u,v):=\begin{cases}
\ds 2C_V\int_{J_u} \phi(x,\nu_u)\,d\mathcal H^{n-1} & \text{if $u \in PR(\O)$\; and }\; v=1\; \text{a.e. in }\; \O,
\cr
+\infty & \text{otherwise},
\end{cases}
\end{equation*}
where $\nu_u$ denotes the exterior unit normal to $J_u$. 
}
\end{remark}

\section{Incompatible wells and linearised elasticity}\label{s:generalizations}

In this section we are going to address two possible extensions of Theorem \ref{thm:G-conv}. We first discuss a generalisation of Theorem \ref{thm:G-conv} to the case where the zeros of the potential $W$ lie in a suitable nonempty compact set $\mathcal K$. Then, we show that our proof-strategy also applies to the case of linearised elasticity. 
Similarly as in Section \ref{s:main}, also in these cases the key tools for the analysis are two suitable variants of 
the piecewise-rigidity property stated in Theorem \ref{t:CGP} (cf.\ Theorem \ref{t:CGPK} and Theorem \ref{t:CGP-sim}). 

\subsection{The case of $\mathcal K$ piecewise-rigid maps}\label{ss:K rigidity}

Let $U \subset \R^n$ be a bounded domain with Lipschitz boundary, and let 
$\mathcal K \subset \MM^{n \times n}$ be a nonempty compact set satisfying the following $L^p$-quantitative rigidity estimate for some $p \in (1,n/(n-1))$: there exists a constant $C>0$ (depending only on $p$ and $n$) such that for every $u\in W^{1,p}(U, \R^{n})$
\begin{equation}\label{quant-rig}
\min_{\AAA\in \mathcal K} \|\nabla u -\AAA\|_{L^p(U, \MM^{n\times n})} \leq C \|\dist (\nabla u,\mathcal K)\|_{L^p(U)}.
\end{equation}
We notice that \eqref{quant-rig} implies the rigidity of the differential inclusion
\begin{equation}\label{e:diff incl}
v\in W^{1,\infty}(U,\R^n)\, \mbox{ and }\,
\nabla v(x)\in \mathcal K\quad \text{a.e. $U$},
\end{equation}
in the sense explained in Lemma \ref{l:rigidity} below. In the statement of Lemma \ref{l:rigidity} we use the same terminology adopted in \cite[Chapter 8]{Rindler}
(see also \cite[Section 1.4]{MLN} and \cite{Kirch}).
\begin{lemma}\label{l:rigidity}
Let $U \subset \R^n$ be a bounded domain with Lipschitz boundary. Let $\mathcal K \subset \MM^{n \times n}$ be a nonempty compact set satisfying \eqref{quant-rig}. Then, the following statements hold true: 
\begin{enumerate}
\item the differential inclusion \eqref{e:diff incl} is rigid for exact solutions\ie
the only solutions to \eqref{e:diff incl} are affine functions;

\smallskip

\item the differential inclusion \eqref{e:diff incl} is rigid for approximate solutions\ie if $\dist(\nabla u_j,\mathcal K)\to 0$ in measure in $U$, $(u_j)$ converges to 
$u$ weakly* in $W^{1,\infty}(U,\R^n)$, $u_j=\AAA x$ on $\partial U$ for some \RRR \EEE $\AAA \in \mathbb M^{n\times n}$, then 
$(\nabla u_j)$ converges in measure to $\nabla u$ in $U$ and $u$ is affine; 

\smallskip

\item the differential inclusion \eqref{e:diff incl} is strongly rigid\ie if $\dist(\nabla u_j,\mathcal K)\to 0$ in measure in $U$ and $(u_j)$ converges to $u$ weakly* in $W^{1,\infty}(U,\R^n)$, then 
$(\nabla u_j)$ converges in measure to $\nabla u$ in $U$ and $u$ is affine;

\smallskip

\item we have
\begin{equation}\label{qc-hull}
\mathcal{K}=\mathcal K^{\rm qc}\,,
\end{equation}
\end{enumerate}
where $\mathcal K^{\rm qc}$ denotes the quasiconvex envelope of $\mathcal K$\ie
\[
\mathcal K^{\rm qc}:=\{\AAA\in\MM^{n\times n}:\, f(\AAA)\leq\sup_{\mathcal K}f,\; \forall \mbox{$f:\MM^{n\times n}\to\R$ quasiconvex}\}\,.
\]
\end{lemma}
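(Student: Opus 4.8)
The plan is to derive all four statements from the single quantitative rigidity estimate \eqref{quant-rig}, using in an essential way that the exponent $p$ lies in the range $(1,n/(n-1))$, which is exactly the range in which $W^{1,p}$-bounded sequences enjoy good compactness for the \emph{function} (not the gradient) through Rellich--Kondrachov, and in which truncation/approximation arguments in the spirit of \cite{CGP} work. I would treat the items essentially in the order (1), then (2), then (3), then (4), since each is a strengthening of the previous one and (4) will follow from (3) by a classical localisation/blow-up argument.

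\medskip

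First, for item (1): if $v\in W^{1,\infty}(U,\R^n)$ solves the exact inclusion $\nabla v\in\mathcal K$ a.e., then $\dist(\nabla v,\mathcal K)=0$ a.e., so the right-hand side of \eqref{quant-rig} vanishes and hence $\min_{\AAA\in\mathcal K}\|\nabla v-\AAA\|_{L^p(U)}=0$; thus $\nabla v$ is (a.e.) equal to a fixed matrix $\AAA\in\mathcal K$, i.e. $v$ is affine. (Strictly one should first apply \eqref{quant-rig} on balls to localise, but since $U$ is connected the conclusion globalises.) For item (2): given $(u_j)$ with $\dist(\nabla u_j,\mathcal K)\to 0$ in measure, $u_j\wto u$ weak* in $W^{1,\infty}$, and $u_j=\AAA x$ on $\partial U$, one applies \eqref{quant-rig} to $u_j-\AAA x$ (which lies in $W^{1,p}_0$, or at least has affine boundary datum, and note $\dist(\nabla u_j-\nabla(\AAA x)+\AAA,\mathcal K)$ should be handled carefully — more precisely one applies \eqref{quant-rig} directly to $u_j$); the uniform $L^\infty$ bound on $\nabla u_j$ together with $\dist(\nabla u_j,\mathcal K)\to 0$ in measure and dominated convergence gives $\|\dist(\nabla u_j,\mathcal K)\|_{L^p(U)}\to 0$, whence there are rotations (matrices) $\AAA_j\in\mathcal K$ with $\nabla u_j-\AAA_j\to 0$ in $L^p$. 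By compactness of $\mathcal K$, up to a subsequence $\AAA_j\to\bar\AAA\in\mathcal K$, so $\nabla u_j\to\bar\AAA$ in $L^p$ and hence in measure; testing the boundary condition (e.g. by integrating $\nabla u_j$ against a fixed vector field, or by the trace) forces $\bar\AAA=\AAA$, so $\nabla u=\AAA$ a.e. and $u=\AAA x$. A subsequence-and-uniqueness argument removes the subsequence. Item (3) is proven exactly as (2) except that, without the boundary condition, one only concludes $\nabla u_j\to\bar\AAA$ in measure for \emph{some} $\bar\AAA\in\mathcal K$ along a subsequence; the limit $u$ then satisfies $\nabla u=\bar\AAA$ a.e., so $u$ is affine, and again the full sequence converges because the limit $u$ is unique.

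\medskip

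For item (4), the inclusion $\mathcal K\subset\mathcal K^{\mathrm{qc}}$ is trivial. For the reverse inclusion I would use the standard characterisation of the quasiconvex hull via gradient Young measures / laminates, or directly: let $\AAA\in\mathcal K^{\mathrm{qc}}$; by definition there is a homogeneous $W^{1,\infty}$ gradient Young measure $\nu$ supported in $\mathcal K$ with barycentre $\AAA$, equivalently a sequence $u_j\in W^{1,\infty}(Q,\R^n)$ on a cube $Q$ with $u_j=\AAA x$ on $\partial Q$, $\|\nabla u_j\|_\infty$ bounded, and $\dist(\nabla u_j,\mathcal K)\to 0$ in $L^p(Q)$ (this is precisely the content of the fundamental theorem for quasiconvex hulls, cf.\ \cite[Chapter 8]{Rindler}). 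Then item (2), applied with $U=Q$, yields $\nabla u_j\to\nabla u$ in measure and $u$ affine with $\nabla u=\AAA$ on $\partial Q$, hence $\nabla u=\AAA$ a.e.; but then the Young measure generated is $\delta_\AAA$, forcing $\AAA\in\mathcal K$ (since $\mathcal K$ is closed and $\nabla u_j\to\AAA$ in measure while $\dist(\nabla u_j,\mathcal K)\to0$). This gives $\mathcal K^{\mathrm{qc}}\subset\mathcal K$ and hence equality. The main obstacle I anticipate is the bookkeeping in item (2): applying \eqref{quant-rig} cleanly to a sequence with \emph{affine} rather than zero boundary data, and making sure the selected near-optimal matrices $\AAA_j\in\mathcal K$ can be shown to converge to the prescribed boundary matrix $\AAA$ (rather than to some other element of $\mathcal K$) — this is where connectedness of $U$, the uniform $W^{1,\infty}$ bound, and the trace theorem all have to be combined. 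Everything else is a routine consequence of \eqref{quant-rig}, compactness of $\mathcal K$, and dominated convergence.
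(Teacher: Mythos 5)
Your proposal is correct, and for items (1)--(3) it is essentially the paper's argument: apply \eqref{quant-rig} to get near-optimal matrices $\AAA_j\in\mathcal K$, use $L^\infty$-boundedness of the gradients to upgrade convergence in measure of $\dist(\nabla u_j,\mathcal K)$ to $L^p$-convergence, extract a convergent subsequence of $(\AAA_j)$ by compactness of $\mathcal K$, and remove the subsequence by uniqueness of the weak* limit (the paper proves (3) first and obtains (1) and (2) as immediate corollaries, whereas you argue each separately, but the mechanism is identical; your aside about localising on balls in (1) is unnecessary, since \eqref{quant-rig} applied once on $U$ already gives $\nabla v=\AAA$ a.e.). The genuine difference is in item (4). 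The paper works with the characterisation $\mathcal K^{\rm qc}=\{\AAA: Q(\dist^q(\cdot,\mathcal K))(\AAA)=0\}$, uses Zhang's truncation lemma to replace the test functions $\varphi_j\in W^{1,\infty}_0$ by uniformly Lipschitz ones $\phi_j$, applies \eqref{quant-rig} to $u_j=\AAA x+\phi_j$, and identifies the limit of the $\AAA_j$ as $\AAA$ via Jensen's inequality and $\int_U\nabla\phi_j\,dx=0$. You instead invoke the representation of $\mathcal K^{\rm qc}$ by homogeneous gradient Young measures (Kinderlehrer--Pedregal) to produce a generating sequence with affine boundary datum $\AAA x$ and bounded gradients, and then apply your item (2); note that your identification of the limit matrix through the boundary condition is, via the divergence theorem, exactly the same zero-mean/Jensen mechanism the paper uses. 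Both routes are sound; the paper's is more self-contained (it needs only Zhang's lemma and the $Q(\dist^q)$ characterisation, both of which it cites anyway), while yours is shorter if one grants the Young-measure machinery but rests on a heavier black box. Finally, the restriction $p\in(1,n/(n-1))$ plays no role in this lemma -- it is part of the hypothesis on $\mathcal K$, not an ingredient of the proof -- so the Rellich--Kondrachov motivation you give for it is a red herring, though a harmless one.
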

For the readers' convenience the proof of Lemma \ref{l:rigidity} is included in the Appendix \ref{a:proof rigidity lemma}. 

\medskip

Below we give a list of nonempty compact sets $\mathcal K\subset\MM^{n\times n}$ 
for which \eqref{quant-rig} holds true. The most prominent examples are due to Ball and James \cite[Proposition 2]{BJ1987} and to Friesecke, James, and M\"uller \cite[Theorem 3.1]{FJM} and correspond, respectively, to the case of two non rank-1 connected matrices and to that of $SO(n)$. 

We notice that in the examples (1) and (3) below, property \eqref{quant-rig} directly follows from an incompatibility condition for the approximate solutions of \eqref{e:diff incl}, as shown in \cite{ChM} (see also \cite[Theorem 1.2]{DLS}). This condition reduces rigidity for multiple-wells to a single-well rigidity statement. We recall here that two disjoint compact sets $K_{1}, K_{2}\in \MM^{n \times n}$ are incompatible for the differential inclusion \eqref{e:diff incl}, with $\mathcal K= K_{1}\cup K_{2}$, if for any sequence $(u_{j})\subset W^{1,\infty}(U, \R^n)$ such that $\dist(\nabla u_j, K_{1}\cup K_{2})\to 0$ in measure, then either $\dist(\nabla u_j, K_{1})\to 0$ or $\dist(\nabla u_j, K_{2})\to 0$ in measure. In this case $K_{1}$ and $K_{2}$ are also called incompatible energy-wells.

In the examples (4) and (5) listed below, property \eqref{quant-rig} is instead a consequence of the Friesecke, James, and M\"uller rigidity estimate \cite[Theorem 3.1]{FJM} for (2), and of the above mentioned incompatibility for approximate solutions of \eqref{e:diff incl}.  
Although equality \eqref{qc-hull} is a consequence of \eqref{quant-rig} as established by Lemma \ref{l:rigidity}, for each example in the list below we also give a precise reference to a direct proof of \eqref{qc-hull}. We refer the reader to \cite{MLN}, \cite{Kirch} and \cite[Chapter 8]{Rindler} for more details on these topics. 
\begin{enumerate}

\item $\mathcal K=\{\AAA_{1},\AAA_{2}\}$, where $\AAA_{1},\AAA_{2}\in \MM^{n\times n}$ are not rank-1 connected, see \cite[Proposition 2]{BJ1987}, (see also \cite[Example 4.3]{Zh92});
\smallskip

\item $\mathcal K=SO(n)$ \cite[Theorem 3.1]{FJM} (for \eqref{qc-hull} see \cite{Kind} and also \cite[Example 4.4]{Zh92}); 



\item $\mathcal K=\{\AAA_{1},\AAA_{2},\AAA_{3}\}$, where $\AAA_{1},\AAA_{2},\AAA_{3}\in \R^{n\times n}$ are such that $\mathcal K$ has no rank-1 connections, see \cite[Section 4]{SvARMA1992};

\smallskip

\item $\mathcal K=\bigcup_{i=1}^{N}\AAA_{i}SO(2)$, where $\AAA_{i}\in \R^{2\times 2}$ are such that ${\rm det}\AAA_{i}>0$ for all $i\in\{1,2,3\}$ and $\mathcal K$ has no rank-1 connections, see \cite[Theorem 2 and Remark 1]{SvIMA1992};

\smallskip

\item $\mathcal K=SO(3)\cup SO(3)\mathbb H$, where $\mathbb H={\rm diag}(h_{1},h_{2},h_{3})$, $h_{1}\geq h_{2}\geq h_{3}>0$ and $h_{2}\neq 1$ (the latter condition is equivalent to $\mathcal K$ having no rank-1 connections). Additionally, one of the following two conditions must hold true: 
\begin{enumerate}
\item[(i)] there exists $i$ such that $(h_{i}-1)(h_{i-1}h_{i+1}-1)\geq 0$ (here the indices are counted modulo $3$); 

\item[(ii)] $h_{1}\geq h_{2}>1>h_{3}>\frac13$ or $3>h_{1}>1> h_{2}\geq h_{3}>0$;   
\end{enumerate}
see \cite[Theorem 1.2]{DKMS}.
\end{enumerate}

\medskip

We now recall an extension of the piecewise-rigidity result contained 
in Theorem \ref{t:CGP} to the case of a compact set $\mathcal K$ for which \eqref{quant-rig} holds true. We state this result for $GSBV$-functions and we refer the reader to \cite[Theorem 2.1]{CGP} for the original statement in the $SBV$-setting. 

\begin{theorem}\label{t:CGPK}
Let $\mathcal K \subset \MM^{n \times n}$ be a nonempty compact set for which \eqref{quant-rig} holds true and let $u\in GSBV(\O,\R^n)$ be such that
$\mathcal{H}^{n-1}(J_u)<+\infty$ and $\nabla u \in \mathcal K$ a.e. in $\O$. Then, $u \in PR_{\mathcal K}(\O)$\ie 
\begin{equation}\label{C-a-K}
u(x)=\sum_{i\in \N} (\AAA_i x+ b_i)\chi_{E_i}(x),
\end{equation}
with $\AAA_i\in \mathcal K$ for every $i \in\N$, $b_i \in \R^n$, and $(E_i)$ Caccioppoli partition of $\O$.  
\end{theorem}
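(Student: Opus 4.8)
The plan is to reduce Theorem~\ref{t:CGPK} to the already-known piecewise-rigidity statement of Theorem~\ref{t:CGP} (the $\mathcal K = SO(n)$ case) by exploiting the quantitative rigidity estimate \eqref{quant-rig}, and in fact to mimic the strategy that Chambolle, Giacomini and Ponsiglione use in \cite{CGP} to pass from a quantitative $L^p$-estimate to piecewise rigidity. The first step is to record the measure-theoretic structure of a $GSBV$ function $u$ with $\mathcal{H}^{n-1}(J_u)<+\infty$: by a standard truncation/approximation one can work with $w:=(u_T)$, the componentwise truncation at level $T$, which lies in $SBV(\O,\R^n)$ with $\nabla w = \nabla u$ on $\{|u|<T\}$ and $J_w \subset J_u$; letting $T\to\infty$ at the end recovers the statement for $u$. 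So it is enough to prove the claim for $u \in SBV(\O,\R^n)\cap L^\infty$, which is exactly the setting of \cite[Theorem~2.1]{CGP}.

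Next I would use the rigidity estimate \eqref{quant-rig} to localise. Since $\nabla u \in \mathcal K$ a.e.\ and $\mathcal H^{n-1}(J_u)<+\infty$, for every ball $B \subset\subset \O$ with $\mathcal H^{n-1}(J_u \cap \partial B)=0$ (which holds for a.e.\ radius) one applies a Poincar\'e--Korn-type argument in $SBV$: either $B$ is essentially split by $J_u$ into two pieces of comparable measure, or one of the two pieces is small. In the "non-splitting" regime one would like to say that $u$ agrees a.e.\ on $B$ (up to a small set) with a Sobolev function to which \eqref{quant-rig} applies, giving $\nabla u$ close in $L^p$ to a single matrix $\AAA\in\mathcal K$; since $\dist(\nabla u,\mathcal K)=0$ a.e., the estimate forces $\nabla u$ to be \emph{exactly} a constant matrix $\AAA_B \in \mathcal K$ on (most of) $B$, hence $u$ is affine there. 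This is precisely where \eqref{quant-rig} does the work that the Friesecke--James--M\"uller estimate does in the $SO(n)$-case. The compactness of $\mathcal K$ is used to ensure $\nabla u \in L^\infty$ and that the relevant minima over $\mathcal K$ are attained.

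Then I would run the covering/partition argument of \cite{CGP}: cover $\O$ (up to an $\mathcal H^{n-1}$-null set, using $\mathcal H^{n-1}(J_u)<+\infty$) by countably many such balls on each of which $u$ is affine, $u(x)=\AAA_i x + b_i$ with $\AAA_i\in\mathcal K$; declare two points equivalent if they lie in a chain of overlapping balls carrying the same affine map. The equivalence classes $E_i$ form a Caccioppoli partition of $\O$ because $\sum_i \mathcal H^{n-1}(\partial^* E_i \cap \O) \le 2\,\mathcal H^{n-1}(J_u) < +\infty$ (interfaces between distinct classes are contained in $J_u$). This yields the representation \eqref{C-a-K}. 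Finally, remove the truncation: applying the above to $(u_T)$ for every $T$ and letting $T\to\infty$, the affine pieces stabilise and one obtains $u\in PR_{\mathcal K}(\O)$.

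I expect the main obstacle to be the "non-splitting $\Rightarrow$ affine on a ball" step, i.e.\ correctly extracting, from a $GSBV$ (or truncated $SBV$) function whose jump set has small measure inside a ball, a comparison Sobolev function on a slightly smaller ball to which \eqref{quant-rig} legitimately applies, and controlling the error set. This is the technical heart of \cite{CGP} and requires a careful Poincar\'e-type inequality in $SBV$ together with the coarea formula to choose good radii; everything else (the truncation reduction, the chaining argument, the perimeter bound on the partition) is essentially bookkeeping. Since the statement is explicitly a $GSBV$ reformulation of \cite[Theorem~2.1]{CGP}, the honest approach is to cite that result for the bounded $SBV$ case and supply only the truncation argument, so the real content of the proof is the reduction $u\in GSBV \rightsquigarrow u_T \in SBV\cap L^\infty$ plus the passage to the limit $T\to\infty$.
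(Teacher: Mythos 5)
The paper does not actually prove this statement: Theorem~\ref{t:CGPK} is \emph{recalled} as a known result, with \cite[Theorem~2.1]{CGP} cited for the original $SBV$ version, and the $GSBV$ variant is simply asserted (exactly as with Theorem~\ref{t:CGP}). So your instinct that the honest content is ``cite CGP for the bounded $SBV$ case and supply the $GSBV$ reduction'' matches the paper's treatment, and your sketch of the CGP machinery (rigidity estimate \eqref{quant-rig} applied to a Sobolev comparison function on balls where $J_u$ has small measure, the constraint $\dist(\nabla u,\mathcal K)=0$ then forcing $\nabla u$ to equal a single $\AAA\in\mathcal K$, followed by a chaining argument with $\sum_i\mathcal H^{n-1}(\partial^*E_i\cap\O)\le 2\,\mathcal H^{n-1}(J_u)$) is a fair description of that proof.

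However, the reduction you single out as ``the real content'' has a genuine gap. The componentwise truncation $w=u_T$ satisfies $\nabla w=\nabla u$ only where the truncation is inactive; on $\{|u_i|\ge T\}$ the $i$-th row of $\nabla w$ vanishes, so $\nabla w\notin\mathcal K$ there (for $\mathcal K=SO(n)$, or for any $\mathcal K$ containing no matrix with a zero row). Hence $u_T$ does \emph{not} satisfy the hypotheses of \cite[Theorem~2.1]{CGP}, the $SBV$ statement cannot be applied to it, and the subsequent claim that ``the affine pieces stabilise'' as $T\to\infty$ is left unjustified (and is moot if the representation for $u_T$ is unavailable in the first place). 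The correct route to the $GSBV$ extension is not a global truncation but the observation that the CGP proof is entirely local: the comparison Sobolev function on a ball is produced by a De Giorgi--Carriero--Leaci-type modification which itself uses only truncations of $u$ recentred by its median on that ball, together with the smallness of $\mathcal H^{n-1}(J_u\cap B)$ relative to $r^{n-1}$, and this construction applies verbatim to a $GSBV$ function whose approximate gradient is bounded (as it is here, since $\mathcal K$ is compact). If you insist on phrasing the extension as a truncation argument, you must truncate \emph{after} localising and recentring, not globally.
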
 
 
For $\e>0$ we consider the functionals $F^{\mathcal K}_\e \colon L^1(\O,\R^{{n}})\times L^1(\O) \longrightarrow [0,+\infty]$ defined as 
\begin{equation}\label{Fe-K}
F^{\mathcal K}_\e(u,v):=\begin{cases}
\ds \int_{\O}\bigg(k_\e \Phi(v)\,W(x,\nabla u)+\frac{V(v)}{\e}+\e|\nabla v|^2\bigg)\dx & (u,v)\in W^{1,2}(\O,\R^{n})\times W^{1,2}(\O),
\cr 
 & 0\leq v\leq 1 \, \text{a.e. in}\; \O
\cr
+\infty & \text{otherwise,}
\end{cases}
\end{equation}
where $k_\e \to +\infty$, as $\e\to 0$, 
$W \colon \O \times \MM^{n\times n} \to [0,+\infty)$ is a Borel function such that $W(x,\AAA)=0$ for every $\AAA\in \mathcal K$. We assume moreover that for every $x\in \O$ and every $\AAA\in \MM^{n\times n}$ it holds $W(x,\AAA)\geq \alpha \, \dist^2(\AAA,\mathcal K)$, for some $\alpha >0$.
%
%
%
%

By combining Proposition \ref{p:domain-K} and Theorem \ref{thm:G-conv-K} below we can identify the $\G$-limit of $F^\mathcal K_\e$. Now using Theorem \ref{t:CGPK} in place of Theorem \ref{t:CGP}, these results can be proven by following exactly the same arguments employed  in the proofs of Proposition \ref{p:domain} and Theorem \ref{thm:G-conv}, respectively. 
For the readers' convenience we notice that in the proof of Proposition \ref{p:domain-K} the following characterisation of $\mathcal K^{\rm qc}$ is needed: 
\[
\mathcal K^{\rm qc}=\{\AAA \in \MM^{n\times n} \colon Q(\dist^q(\cdot,\mathcal K))(\AAA)=0\},
\]
for every $q\in[1,+\infty)$ (cf. \cite[Proposition 2.14]{Zh92}, and also \cite[Theorem 4.10]{MLN}) together with Lemma \ref{l:rigidity}, statement (4). 

Instead, the proof of Theorem \ref{thm:G-conv-K} is identical to that of Theorem \ref{thm:G-conv} up to replacing the coercivity estimate \eqref{e:dist coerc} with 
\[
\dist^2(\AAA,\mathcal K)\geq \left[\frac12|\AAA\xi|^2-\max_{\mathcal K}|\mathbb{K}|^2\right]_+\,,
\]
which holds true for every $\AAA\in\MM^{n\times n}$ and every $\xi\in\mathbb{S}^{n-1}$. 

The following proposition shows that the $\G$-limit of $(F^{\mathcal K}_\e)$ (if it exists) is finite only on $PR_{\mathcal K}(\O)$.
\begin{proposition}\label{p:domain-K}
 Let $(u_\e,v_\e) \subset W^{1,2}(\O,\R^{n})\times W^{1,2}(\O)$,  $0\leq v_\e \leq 1$ a.e.\ in $\O$, be such that 
 $$
 \liminf_{\e \to 0}F^{\mathcal K}_\e(u_\e,v_\e)<+\infty 
 \quad \text{and}\quad u_\e \to u \; \text{ in } \; L^1(\O,\R^n).
 $$
 Then, $v_\e \to 1$ in $L^1(\O)$ and $u \in PR_\mathcal{K}(\O)$.
 \end{proposition}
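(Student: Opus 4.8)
The plan is to mirror, essentially verbatim, the proof of Proposition~\ref{p:domain}, substituting the compact set $\mathcal K$ for $SO(n)$ throughout and invoking the $\mathcal K$-versions of the rigidity and relaxation results in place of their $SO(n)$-counterparts. First I would extract a subsequence (not relabelled) along which $\liminf_{\e\to 0}F^{\mathcal K}_\e(u_\e,v_\e)=\sup_\e F^{\mathcal K}_\e(u_\e,v_\e)<+\infty$. The convergence $v_\e\to 1$ in $L^1(\O)$ follows exactly as before: for $\eta>0$ the bound $\mathcal L^n(\{1-\eta>v_\e\})\min\{V(s):s\in[0,1-\eta)\}\le \int_\O V(v_\e)\dx\le \e\,\sup_\e F^{\mathcal K}_\e(u_\e,v_\e)$ together with $V^{-1}(\{0\})=\{1\}$ gives convergence in measure, hence in $L^1$ by the uniform bound $0\le v_\e\le 1$. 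This step is unaffected by the change of well set, since it only uses the surface part of the functional.

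Next I would run Ambrosio's global construction: after the $BV$ coarea formula and the mean-value theorem I obtain, for fixed $\delta\in(0,\tfrac12)$, a level $\lambda_\e^\delta\in(\delta,1-\delta)$ and the set $\O^\delta_\e:=\{v_\e<\lambda_\e^\delta\}$ of finite perimeter with $\sup_\e\mathcal H^{n-1}(\partial^*\O^\delta_\e)<+\infty$. Picking any fixed $\AAA_0\in\mathcal K$ (which is nonempty), I set $\tilde u_\e:=u_\e\,\chi_{\O\setminus\O^\delta_\e}+\AAA_0 x\,\chi_{\O^\delta_\e}\in GSBV(\O,\R^n)\cap L^1(\O,\R^n)$, with $\nabla\tilde u_\e=\nabla u_\e\chi_{\O\setminus\O^\delta_\e}+\AAA_0\chi_{\O^\delta_\e}$ and $J_{\tilde u_\e}\subset\partial^*\O^\delta_\e$, so $\sup_\e\mathcal H^{n-1}(J_{\tilde u_\e})<+\infty$; moreover $\mathcal L^n(\O^\delta_\e)\to 0$ gives $\tilde u_\e\to u$ in $L^1$. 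Using the lower bound $W(x,\AAA)\ge\alpha\,\dist^2(\AAA,\mathcal K)$ and $\Phi$ increasing with $\Phi(\delta)>0$, I get
\[
F^{\mathcal K}_\e(u_\e,v_\e)\ge \alpha\,k_\e\,\Phi(\delta)\int_\O \dist^2(\nabla\tilde u_\e,\mathcal K)\dx,
\]
which, since $\dist(\cdot,\mathcal K)$ controls $|\cdot|$ up to the constant $\max_{\mathcal K}|\mathbb K|$ (by compactness of $\mathcal K$), yields $\sup_\e\|\nabla\tilde u_\e\|_{L^2}<+\infty$ and hence property \eqref{tilde-u}. Ambrosio's $GSBV$ closure theorem then gives $u\in GSBV(\O,\R^n)$, $\nabla\tilde u_\e\wto\nabla u$ in $L^2$, and $\mathcal H^{n-1}(J_u)<+\infty$.

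The final step identifies $\nabla u$. From the displayed inequality, $\int_\O\dist^2(\nabla\tilde u_\e,\mathcal K)\dx\le F^{\mathcal K}_\e(u_\e,v_\e)/(\alpha k_\e\Phi(\delta))\to 0$ since $k_\e\to+\infty$. By lower semicontinuity of $GSBV$-functionals with respect to weak $L^1$-convergence of approximate gradients (cf.\ \cite[Theorem~5.29]{AFP00}, \cite[Theorem~1.2]{KR}) applied to the quasiconvex envelope $Q(\dist^2(\cdot,\mathcal K))$,
\[
\int_\O Q(\dist^2(\cdot,\mathcal K))(\nabla u)\dx\le \liminf_{\e\to 0}\int_\O \dist^2(\nabla\tilde u_\e,\mathcal K)\dx=0,
\]
so $\nabla u(x)\in\{\AAA:Q(\dist^2(\cdot,\mathcal K))(\AAA)=0\}$ a.e. By the characterisation $\{\AAA:Q(\dist^q(\cdot,\mathcal K))(\AAA)=0\}=\mathcal K^{\rm qc}$ (cf.\ \cite[Proposition~2.14]{Zh92}, \cite[Theorem~4.10]{MLN}) and statement (4) of Lemma~\ref{l:rigidity}, i.e.\ $\mathcal K^{\rm qc}=\mathcal K$, we conclude $\nabla u\in\mathcal K$ a.e.\ in $\O$. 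Then Theorem~\ref{t:CGPK} applies and gives $u\in PR_{\mathcal K}(\O)$, proving the claim. The only substantive point compared to Proposition~\ref{p:domain} — and the one I would flag as the main thing to get right — is that the identification $\{Q(\dist^q(\cdot,\mathcal K))=0\}=\mathcal K$ now rests on the general equality $\mathcal K^{\rm qc}=\mathcal K$ from Lemma~\ref{l:rigidity}(4), which is precisely where the quantitative rigidity hypothesis \eqref{quant-rig} enters; Zhang's theorem \cite{ZH97,Zh92} was specific to $SO(n)$ and is no longer available.
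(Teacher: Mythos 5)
Your proposal is correct and follows exactly the route the paper indicates for Proposition~\ref{p:domain-K}: repeat the proof of Proposition~\ref{p:domain} with $\mathcal K$ in place of $SO(n)$, replace Zhang's $SO(n)$-specific theorem by the characterisation $\mathcal K^{\rm qc}=\{\AAA\colon Q(\dist^q(\cdot,\mathcal K))(\AAA)=0\}$ combined with Lemma~\ref{l:rigidity}(4), and conclude via Theorem~\ref{t:CGPK}. You also correctly identified this substitution as the only substantive change, which is precisely the point the paper flags for the reader.
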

In addition, the following $\Gamma$-convergence result holds true.

\begin{theorem}
\label{thm:G-conv-K} 
The family $(F^{\mathcal K}_\e)$ defined in \eqref{Fe-K} $\Gamma(L^1(\O,\R^{n})\times L^1(\O))$-converges to the functional 
$F^\mathcal{K} \colon L^1(\O,\R^{n})\times L^1(\O)\longrightarrow [0,+\infty]$ given by
\begin{equation}\label{F-K}
F^{\mathcal K}(u,v):=\begin{cases}
2C_V \mathcal H^{n-1}(J_u) & \text{if $u \in PR_{\mathcal K}(\O)$}, \; v=1\; \text{ a.e. in }\; \O
\cr
+\infty & \text{otherwise},
\end{cases}
\end{equation}
where $C_V:=2\int_0^1 \sqrt{V(s)}\,ds$. 
\end{theorem}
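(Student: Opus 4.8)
The plan is to mirror exactly the two-part $\Gamma$-convergence argument already carried out for $F_\e$ in Theorem~\ref{thm:G-conv}, substituting $SO(n)$ with the compact set $\mathcal K$ and Theorem~\ref{t:CGP} with its $\mathcal K$-analogue Theorem~\ref{t:CGPK}. Since the paper has already indicated which ingredients must be replaced, the proof will essentially amount to checking that none of the $SO(n)$-specific features were actually used in an essential way.

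\textbf{Lower bound.} Let $(u_\e,v_\e)\to(u,v)$ in $L^1(\O,\R^n)\times L^1(\O)$ with $0\le v_\e\le 1$; as before we may pass to a subsequence so that the liminf is a limit and $\sup_\e F^{\mathcal K}_\e(u_\e,v_\e)<+\infty$. Proposition~\ref{p:domain-K} then gives $u\in PR_{\mathcal K}(\O)$ and $v=1$ a.e. By Theorem~\ref{t:CFI} (applicable since $u$ is Caccioppoli-affine) we have $J_u=\bigcup_i\partial^*E_i\cap\O$ up to $\mathcal H^{n-1}$-null sets, so $\mathcal H^{n-1}(J_u)<+\infty$ and slicing (Theorem~\ref{slicing}) is available. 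The Cauchy--Schwarz/coarea computation leading to \eqref{super-level-set} uses only the $V$ and $|\nabla v|^2$ terms, hence is unchanged. For \eqref{e:percut} one needs the coercivity inequality; here \eqref{e:dist coerc} is replaced by
\[
\dist^2(\AAA,\mathcal K)\geq\Big[\tfrac12|\AAA\xi|^2-\max_{\mathbb K\in\mathcal K}|\mathbb K|^2\Big]_+,
\]
valid for all $\AAA\in\MM^{n\times n}$, $\xi\in\Sn$, because $|\AAA\xi|\le|\AAA-\mathbb K|\,|\xi|+|\mathbb K\xi|\le\dist(\AAA,\mathcal K)+\max_{\mathcal K}|\mathbb K|$ for any $\mathbb K\in\mathcal K$. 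With $n$ replaced by $\max_{\mathcal K}|\mathbb K|^2$ throughout \eqref{e:fette}--\eqref{e:v-vanish}, the one-dimensional argument (Rellich--Kondrachov on intervals where $v_\e^\xi$ stays bounded away from $0$, forcing a jump of $\chi_{\O_\e}$ near each jump point of $u_y^\xi$) goes through verbatim, yielding \eqref{e:percut}, then \eqref{e:surfaceinq} via the co-area formula for rectifiable sets, \eqref{e:juxi}, superadditivity, and $\delta\to0^+$.

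\textbf{Upper bound.} The recovery-sequence construction is literally the same. The density step uses only that $u\in PR_{\mathcal K}(\O)$ is Caccioppoli-affine with an underlying Caccioppoli partition $(E_i)$: truncating to finitely many sets and applying \cite[Corollary 2.5]{BCG} to approximate the finite partition by polyhedral ones gives the reduction to $u$ of the form \eqref{approx} with $\widehat\AAA_i\in\mathcal K$ and $\widehat E_i$ polyhedral; note lower semicontinuity of $\mathcal H^{n-1}(J_\cdot)$ together with the a priori bound $\mathcal H^{n-1}(J_{u_N})\le\mathcal H^{n-1}(J_u)$ again forces convergence of the jump measures. The phase field $v_\e=\min\{v_\e^1,\dots,v_\e^M\}$ built from the optimal profile $w_\eta$ around each simplex $S_i$ is identical, and the estimates \eqref{e:en-v-i}--\eqref{e:claim-S} do not see $\mathcal K$ at all, so $\limsup_\e\int_\O(V(v_\e)/\e+\e|\nabla v_\e|^2)\dx\le 2(C_V+\eta)\,\mathcal H^{n-1}(J_u)$, and $\eta\to0$. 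Finally, one must kill the bulk term $\int_\O k_\e\Phi(v_\e)\,W(x,\nabla u_\e)\dx$: set $u_\e=(1-\varphi_\e)u$ with $\varphi_\e\in C^\infty_0(A^\e)$ a cut-off between $(A^\e)'$ and $A^\e$; since $v_\e\equiv0$ on $A^\e$ and $\Phi(0)=0$, the integral reduces to $\int_{\O\setminus A^\e}k_\e\Phi(v_\e)\,W(x,\nabla u)\dx$, and there $\nabla u\in\mathcal K$ a.e.\ (as $u\in PR_{\mathcal K}(\O)$) makes $W(x,\nabla u)=0$ by the assumption $W(x,\cdot)|_{\mathcal K}\equiv0$. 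Hence \eqref{e:ls-volume} holds and $\limsup_\e F^{\mathcal K}_\e(u_\e,v_\e)\le 2C_V\mathcal H^{n-1}(J_u)=F^{\mathcal K}(u,v)$.

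\textbf{Main obstacle.} No step is genuinely hard once Proposition~\ref{p:domain-K} is granted; the only point requiring care is the coercivity replacement in the slicing lower bound (ensuring $\max_{\mathcal K}|\mathbb K|$ is finite, which holds since $\mathcal K$ is compact) and checking that the polyhedral-approximation density argument only relied on the Caccioppoli-partition structure, not on the values of the matrices $\AAA_i$. I therefore expect the write-up to consist mainly of a careful statement of these substitutions and an explicit reference back to the corresponding passages in the proof of Theorem~\ref{thm:G-conv}.
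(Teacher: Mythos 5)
Your proposal is correct and follows essentially the same route as the paper, which itself reduces Theorem~\ref{thm:G-conv-K} to the proofs of Proposition~\ref{p:domain} and Theorem~\ref{thm:G-conv} with exactly the two substitutions you identify: Theorem~\ref{t:CGPK} in place of Theorem~\ref{t:CGP} (together with the characterisation of $\mathcal K^{\rm qc}$ and Lemma~\ref{l:rigidity}(4) inside Proposition~\ref{p:domain-K}), and the coercivity bound $\dist^2(\AAA,\mathcal K)\geq\big[\tfrac12|\AAA\xi|^2-\max_{\mathcal K}|\mathbb K|^2\big]_+$ in place of \eqref{e:dist coerc}. Your verification that the upper-bound construction never uses the specific structure of $SO(n)$ beyond $W(x,\cdot)|_{\mathcal K}\equiv 0$ matches the paper's (unwritten) reasoning.
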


\subsection{The case of linearised elasticity}
An approximation result similar to that proven in Theorems \ref{thm:G-conv} and \ref{thm:G-conv-K} can be established for interfacial energies appearing in the context of linearised elasticity. In this case the energy functionals are defined on piecewise-rigid displacements\ie on displacements of a linearly elastic body which does not store elastic energy.
\begin{definition}
A map $u \colon \O \to \R^n$ is called a piecewise-rigid displacement if there exist skew-symmetric matrices $\AAA_i\in \MM^{n\times n}_{\rm skew}$ and vectors $b_i\in\R^n$ such that 
\begin{equation}\label{PR-D}
u(x)=\sum_{i\in \N} (\AAA_i x+ b_i)\chi_{E_i}(x),
\end{equation}
with $(E_i)$ Caccioppoli partition of $\O$.  
The set of piecewise-rigid displacements on $\O$ will be denoted by $PRD(\O)$. 
\end{definition}
The next piecewise-rigidity result corresponds to Theorems \ref{t:CGP} and \ref{t:CGPK} in the setting of linearised elasticity. We state it here for $GSBD$ maps, the original version \cite[Theorem A.1]{CGP} being stated in the $SBD$-setting. The proof of this generalization is implied for instance by
\cite[Theorem 2.1]{Fr} (see also the related comments in \cite[Remark 2.2 $(i)$]{Fr}). 

In what follows $e(u)$ denotes the symmetrized approximate gradient of $u\in GSBD(\O)$ (cf. 
\cite{DM}).  

\begin{theorem}\label{t:CGP-sim}
Let $u\in GSBD(\O)$ be such that 
$\mathcal{H}^{n-1}(J_u)<+\infty$ and $e(u)=0$ a.e. in $\O$. Then, $u \in PRD(\O)$. 
\end{theorem}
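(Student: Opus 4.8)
\textbf{Proof plan for Theorem \ref{t:CGP-sim}.}

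The plan is to reduce the statement to the already-known $SBD$-version \cite[Theorem A.1]{CGP} by an approximation argument in the spirit of what is done for $GSBV$ via truncations in Theorem \ref{slicing}. Concretely, given $u\in GSBD(\O)$ with $\mathcal H^{n-1}(J_u)<+\infty$ and $e(u)=0$ a.e., I would first recall (cf.\ \cite{DM}) that for a direction $\xi$ the one-dimensional slices $u^\xi_y$ inherit $BV$-type regularity, and that $e(u)=0$ forces the symmetrized derivative of each slice in the $\xi$-direction to vanish off the jump set; since $\mathcal H^{n-1}(J_u)<+\infty$, for $\mathcal H^{n-1}$-a.e.\ $\xi$ the jump directions are non-degenerate and the slices are piecewise constant in the scalar quantity $\langle u^\xi_y,\xi\rangle$ with at most finitely many jumps on $\mathcal H^{n-1}$-a.e.\ line. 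This is enough to localize: on a countable family of balls $B\subset\subset\O$ one can perform a standard truncation/mollification that produces $u_k\in SBD(B)$ (e.g.\ by composing with a bounded bi-Lipschitz map of $\R^n$ or by the truncation construction used to pass between $GSBD$ and $SBD$ on bounded pieces) with $e(u_k)\to 0$ in measure, $\mathcal H^{n-1}(J_{u_k})$ equibounded, and $u_k\to u$ in measure.

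Next I would apply \cite[Theorem A.1]{CGP} to the truncated sequence, or rather its limiting content: on each such ball $B$, a function with finite jump set and vanishing symmetrized gradient is a piecewise-rigid displacement, i.e.\ equals $\AAA x+b$ with $\AAA\in\MM^{n\times n}_{\rm skew}$ on each element of a Caccioppoli partition of $B$. The stability of this conclusion under the (measure) convergence $u_k\to u$, together with the upper semicontinuity of the partition structure, would give that $u$ itself is a piecewise-rigid displacement on each $B$. Finally I would glue the local descriptions: since two affine maps $\AAA x+b$, $\AAA' x+b'$ with skew $\AAA,\AAA'$ agreeing on a set of positive measure must coincide, the local Caccioppoli partitions are consistent on overlaps, so an exhaustion of $\O$ by such balls yields a global Caccioppoli partition $(E_i)$ of $\O$ and skew matrices $\AAA_i$, vectors $b_i$, with $u=\sum_i(\AAA_i x+b_i)\chi_{E_i}$, that is $u\in PRD(\O)$.

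Alternatively — and this is really the route the paper hints at with the reference to \cite[Theorem 2.1]{Fr} and \cite[Remark 2.2 $(i)$]{Fr} — one can bypass the hand-made truncation entirely: the quantitative geometric-rigidity-type estimates in \cite{Fr} are already phrased for $GSBD$ functions, so one simply invokes the fact that $e(u)=0$ with $\mathcal H^{n-1}(J_u)<+\infty$ forces $u$, on each connected component of $\O\setminus(\text{a good representative of }J_u)$, to be an infinitesimal rigid motion, and that the collection of these components, being separated by a set of finite $\mathcal H^{n-1}$-measure, is (modulo $\mathcal L^n$-null sets) a Caccioppoli partition. The main obstacle — and the reason the statement is nontrivial rather than a one-line corollary — is precisely the passage from $SBD$ to $GSBD$: in $GSBD$ one has no a priori integrability of $u$ or of $e(u)$, so the slicing/truncation must be done carefully to land in $SBD$ locally while keeping control of the jump set, and one must ensure the truncation does not create spurious jumps; this is exactly the technical point that \cite{Fr} handles and which I would cite rather than reprove.
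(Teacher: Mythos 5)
Your second, ``alternative'' route is in fact exactly the paper's proof: the paper gives no self-contained argument for Theorem~\ref{t:CGP-sim}, but simply observes that the $GSBD$ statement is implied by the piecewise Korn inequality of \cite[Theorem 2.1]{Fr} together with \cite[Remark 2.2 $(i)$]{Fr}, so your proposal coincides with the paper's approach. One caution about your first sketch, should you ever try to make it self-contained: truncation is not a legitimate operation in $GSBD$, since there is no chain rule controlling $e(\phi\circ u)$ (unlike $\nabla(\phi\circ u)$ in $GSBV$) --- this is precisely why $GSBD$ is defined via one-dimensional slicing and why the reduction to $SBD$ requires the quantitative Korn--Poincar\'e-type estimates of \cite{Fr} rather than a mollification/composition argument; but since you ultimately defer to \cite{Fr} for exactly this point, the proposal stands.
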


To approximate interfacial energies defined on $PRD(\O)$, for $\e>0$, we consider the functionals $E_\e \colon L^1(\O,\R^{{n}})\times L^1(\O) \longrightarrow [0,+\infty]$ defined as 
\begin{equation}\label{E-e-lin}
E_\e(u,v):=\begin{cases}
\ds \int_{\O}\hspace{-1mm}\bigg(\hspace{-1mm}k_\e \Phi(v)\,
W(x,\nabla u) \dx+\frac{V(v)}{\e}+\e|\nabla v|^2\bigg)\dx \hspace{-2mm} & (u,v)\in W^{1,2}(\O,\R^n)\times W^{1,2}(\O),
\cr 
 & 0\leq v\leq 1 \, \text{a.e. in}\; \O
\cr
+\infty & \text{otherwise,}
\end{cases}
\end{equation}
where $k_\e \to +\infty$, as $\e\to 0$, 
$W:\O\times\MM^{n\times n}\to[0,+\infty)$ is a 
Borel function such that for every $x\in \O$ and for every 
$\AAA\in\MM^{n\times n}$ it holds $\alpha |\AAA^{\sym}|^2 \leq  W(x,\AAA)$ for some $\alpha>0$. 
Here we denote by $\AAA^{\sym}$ the symmetric part of $\AAA$, namely $\AAA^{\sym}:=\frac{\AAA+\AAA^T}2$. 
We use standard notation for the strain $e(u)=(\nabla u)^{\sym}$ of $u\in W^{1,2}(\O,\R^n)$. 

\begin{remark}
{\rm 
We refer the reader to \cite[Remark~4.14]{CFI17} for an explicit example of a nonconvex, polyconvex function 
which depends non-trivially on the skew-symmetric part of $\AAA$ and satisfies the bounds 
 \[
 \alpha |\AAA^{\rm sym}|^2 \leq  W(x,\AAA)\leq\beta (|\AAA^{\rm sym}|^2+1) 
 \]
for some $\alpha,\,\beta>0$, for every $x\in \O$ and for every 
$\AAA\in\MM^{n\times n}$. 

Another example can be obtained by taking $W(\AAA)=h^2(\AAA)$, $\AAA\in\MM^{n\times n}$, 
where $h$ is a one-homogeneous quasiconvex function such that for every $\AAA\in\MM^{n\times n}$ 
and for some $\alpha,\,\beta>0$
\[
\alpha|\AAA^\sym|\leq h(\AAA)\leq \beta|\AAA^\sym|\,,
\]
with $h$ depending non-trivially on $\AAA^{\rm skew}:=\frac{\AAA-\AAA^T}{2}$. We notice that, in particular, $h$ (and therefore $W$) is not convex. 
A function as above can be obtained by slightly modifying M\"uller's celebrated example 
\cite{Mu92} similarly as in \cite[Section 7]{CFVG20}. 
}
\end{remark}

In the following proposition we show that the $\G$-limit of $(E_\e)$ (if it exists) is finite only on piecewise-rigid displacements. 
\begin{proposition}\label{p:domain-lin}
Let $(u_\e,v_\e) \subset W^{1,2}(\O,\R^n)\times W^{1,2}(\O)$  
be such that 
$$
\liminf_{\e \to 0}E_\e(u_\e,v_\e)<+\infty 
\quad \text{and}\quad u_\e \to u \; \text{ in } \; L^1(\O,\R^n).
$$
Then, $v_\e \to 1$ in $L^1(\O)$ and $u \in PRD(\O)$.  
\end{proposition}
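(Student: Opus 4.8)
The plan is to repeat \emph{verbatim} the proof-strategy of Proposition \ref{p:domain}, systematically replacing the $GSBV$-toolbox by its $GSBD$-counterpart and Theorem \ref{t:CGP} by Theorem \ref{t:CGP-sim}. First, up to extracting a (not relabelled) subsequence one may assume $\sup_\e E_\e(u_\e,v_\e)<+\infty$. The convergence $v_\e\to1$ in $L^1(\O)$ then follows exactly as in Proposition \ref{p:domain}: the bound on $\e^{-1}\int_\O V(v_\e)\dx$ together with $V^{-1}(\{0\})=\{1\}$ forces $v_\e\to1$ in measure, and the constraint $0\le v_\e\le1$ upgrades this to $L^1$-convergence.

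For the second assertion I would run Ambrosio's global argument \cite{Amb}. By the Cauchy--Schwarz inequality and the $BV$ coarea formula \cite[Theorem~3.40]{AFP00},
\[
E_\e(u_\e,v_\e)\ge 2\int_\O\sqrt{V(v_\e)}\,|\nabla v_\e|\dx\ge 2\int_0^1\sqrt{V(s)}\,\mathcal H^{n-1}(\partial^*\{v_\e<s\})\,ds,
\]
so that, fixing $\d\in(0,\tfrac12)$, the Mean-value Theorem yields $\lambda_\e^\d\in(\d,1-\d)$ for which the set $\O_\e^\d:=\{v_\e<\lambda_\e^\d\}$ has equibounded perimeter. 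Choosing now $\AAA\in\MM^{n\times n}_{\rm skew}$ and setting $\tilde u_\e:=u_\e\chi_{\O\setminus\O_\e^\d}+\AAA x\,\chi_{\O_\e^\d}$, one checks (as in Proposition \ref{p:domain}, using the $SBD$/$GSBD$ theory in place of the $GSBV$ one) that $\tilde u_\e\in GSBD(\O)$, that $J_{\tilde u_\e}\subset\partial^*\O_\e^\d$, hence $\sup_\e\mathcal H^{n-1}(J_{\tilde u_\e})<+\infty$, and that $e(\tilde u_\e)=e(u_\e)\chi_{\O\setminus\O_\e^\d}$ because $\AAA$ is skew-symmetric; moreover $\mathcal L^n(\O_\e^\d)\to0$ gives $\tilde u_\e\to u$ in $L^1(\O,\R^n)$. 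The lower bound $\alpha|\AAA^\sym|^2\le W(x,\AAA)$ and the monotonicity of $\Phi$ then yield
\[
E_\e(u_\e,v_\e)\ge\alpha\,k_\e\,\Phi(\d)\int_\O|e(\tilde u_\e)|^2\dx,
\]
so that $\sup_\e\|e(\tilde u_\e)\|_{L^2(\O,\MM^{n\times n})}<+\infty$ and in fact $\int_\O|e(\tilde u_\e)|^2\dx\le E_\e(u_\e,v_\e)/(\alpha k_\e\Phi(\d))\to0$.

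At this stage I would invoke the $GSBD$ compactness and closure theorem (Dal Maso \cite{DM}; see also \cite{Fr}) in place of Ambrosio's $GSBV$ closure theorem: from $\tilde u_\e\to u$ in $L^1(\O,\R^n)$ together with the uniform bounds on $\|e(\tilde u_\e)\|_{L^2}$ and on $\mathcal H^{n-1}(J_{\tilde u_\e})$ one deduces that $u\in GSBD(\O)$, that $e(\tilde u_\e)\wto e(u)$ in $L^2(\O,\MM^{n\times n})$, and that $\mathcal H^{n-1}(J_u)\le\liminf_\e\mathcal H^{n-1}(J_{\tilde u_\e})<+\infty$. Since the integrand $\AAA\mapsto|\AAA^\sym|^2$ is convex, no quasiconvexification is needed here (unlike in the proof of Proposition \ref{p:domain}): weak $L^2$-lower semicontinuity together with $\int_\O|e(\tilde u_\e)|^2\dx\to0$ gives directly $\int_\O|e(u)|^2\dx=0$, i.e.\ $e(u)=0$ a.e.\ in $\O$. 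An application of Theorem \ref{t:CGP-sim} then concludes that $u\in PRD(\O)$.

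I expect the $GSBD$-leg to be the only delicate point: one must make sure that the competitor $\tilde u_\e$, obtained by gluing the Sobolev map $u_\e$ to the affine (skew) map $\AAA x$ across the finite-perimeter interface $\partial^*\O_\e^\d$, is a genuine element of $GSBD(\O)$ with $J_{\tilde u_\e}\subset\partial^*\O_\e^\d$, and that the Dal Maso compactness theorem applies under an $L^2$-control of \emph{symmetrized} gradients only (together with the convergence $\tilde u_\e\to u$); both facts are by-now standard but require slightly more care than their $GSBV$ analogues, the full gradient being uncontrolled.
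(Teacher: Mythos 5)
Your proposal is correct and follows essentially the same route as the paper: the authors likewise rerun the proof of Proposition \ref{p:domain} verbatim up to the construction of $\tilde u_\e$, obtain $E_\e(u_\e,v_\e)\ge\alpha k_\e\Phi(\delta)\int_\O|e(\tilde u_\e)|^2\dx$, invoke Dal Maso's $GSBD$ compactness theorem \cite[Theorem~11.3]{DM} to get $u\in GSBD(\O)$ with $e(\tilde u_\e)\wto e(u)$ and $\mathcal H^{n-1}(J_u)<+\infty$, deduce $e(u)=0$ from the vanishing of $\int_\O|e(\tilde u_\e)|^2\dx$, and conclude via Theorem \ref{t:CGP-sim}. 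Your observation that convexity of $\AAA\mapsto|\AAA^{\sym}|^2$ makes the quasiconvexification step of Proposition \ref{p:domain} unnecessary matches the paper's treatment.
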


\begin{proof}
We argue as in the proof of Proposition \ref{p:domain} 
up to estimate \eqref{e:tildeueps prop} upon replacing $F_\e$ 
with $E_\e$, $\dist^2( \nabla u_\e,SO(n))$ with $|e(u_\e)|^2$,
and similarly for the corresponding quantities depending on 
$\tilde u_\e$. Therefore \eqref{dist-to-0} now becomes
\begin{equation}\label{dist-to-0 eu}
E_\e(u_\e,v_\e)\geq \alpha\,k_\e \Phi(\delta)
\int_\O|e(\tilde{u}_\e)|^2dx\,.
\end{equation}
Since by assumptions $\sup_{\e}E_\e(u_\e,v_\e)<+\infty$, recalling that $k_{\e}\to+\infty$ as $\e\to 0$, inequality \eqref{dist-to-0 eu} implies $\lim_{\e}\|e(\tilde{u}_\e)\|_{L^2(\O,\MM^{n\times n}_\sym)}=0$. The latter, together with $\sup_\e\mathcal{H}^{n-1}(J_{\tilde{u}_\e})<+\infty$
(cf. \eqref{e:salto tildeueps}), allows us to exploit Dal Maso's 
$GSBD$ Compactness Theorem \cite[Theorem~11.3]{DM} which implies both that 
$u \in GSBD(\O)$ and that
\[
e(\tilde u_\e) \wto e(u) \; \text{ in $L^2(\O,\MM^{n \times n}_{\sym})$ \; and } \; 
\mathcal H^{n-1}(J_u)\leq\liminf_{\e\to 0} \mathcal H^{n-1}(J_{\tilde u_\e})<+\infty\,
\]
Therefore, we get
\begin{equation*}
\int_{\O} |e(u)|^2\dx 
\leq \lim_{\e \to 0} \int_{\O} |e(\tilde u_\e)|^2\dx =0\,,
\end{equation*}
and thus $e(u)=0$ a.e.\,in $\O$. Theorem~\ref{t:CGP-sim} then yields
 that $u \in PRD(\O)$. 
\end{proof}
\begin{remark}\label{strong-conv}
{\rm
We notice that from the weak convergence of $e(\tilde u_\e)$ to $e(u)$ 
in $L^2(\O,\MM^{n\times n}_\sym)$ and the convergence of $\|e(\tilde u_\e)\|_{L^2(\O,\MM^{n\times n}_\sym)}$ 
to $\|e(u)\|_{L^2(\O,\MM^{n\times n}_\sym)}$, we conclude that
\begin{equation}\label{strong-sim}
e(\tilde u_\e) \to e(u)\;  \text{ in }\; L^2(\O,\MM^{n\times n}_\sym).
\end{equation}
}
\end{remark}

\noindent Arguing as in the proof of Theorem \ref{thm:G-conv}, on account of Proposition \ref{p:domain-lin} we can now prove the following $\Gamma$-convergence result for the family $(E_\e)$.

\begin{theorem}\label{thm:G-conv-lin} 
The family of functionals $(E_\e)$ defined in \eqref{E-e-lin} $\Gamma(L^1(\O,\R^n)\times L^1(\O))$-converges 
to the functional $E \colon L^1(\O,\R^n)\times L^1(\O)\longrightarrow [0,+\infty]$ given by
\begin{equation}\label{E}
E(u,v):=\begin{cases}
2C_V \mathcal H^{n-1}(J_u) & \text{if $u \in PRD(\O)$\; and }\; v=1\; \text{a.e. in }\; \O,
\cr
+\infty & \text{otherwise},
\end{cases}
\end{equation}
where $C_V:=2\int_0^1 \sqrt{V(s)}\,ds$. 
\end{theorem}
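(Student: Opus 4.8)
The plan is to adapt verbatim the two-step structure of the proof of Theorem \ref{thm:G-conv}, replacing the geometrically-nonlinear ingredients by their linearised counterparts. By Proposition \ref{p:domain-lin} the domain of any sequential $\Gamma$-limit is contained in $\{(u,1): u\in PRD(\O)\}$, so it suffices to prove the liminf inequality against $E$ and to construct a recovery sequence for every $u\in PRD(\O)$ with $v=1$.

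For the \emph{lower bound}, let $(u_\e,v_\e)\to(u,v)$ in $L^1(\O,\R^n)\times L^1(\O)$ with $\sup_\e E_\e(u_\e,v_\e)<+\infty$; Proposition \ref{p:domain-lin} gives $v=1$ and $u\in PRD(\O)$. Exactly as in \eqref{super-level-set} the Cauchy--Schwarz inequality and the $BV$ coarea formula yield, for fixed $\d\in(0,\tfrac12)$ and a suitable $\lambda_\e^\d\in(\d,1-\d)$,
\[
E_\e(u_\e,v_\e)\geq 2\Big(\int_\d^{1-\d}\sqrt{V(s)}\,ds\Big)\mathcal H^{n-1}(\partial^*\{v_\e<\lambda_\e^\d\}),
\]
so it remains to bound $\liminf_\e\mathcal H^{n-1}(\partial^*\{v_\e<s\})\geq 2\mathcal H^{n-1}(J_u)$ for a.e.\ $s$ and then send $\d\to0^+$. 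This is again done by slicing: for $\xi\in\Sn$, writing $\O_\e:=\{v_\e<s\}$, one shows the one-dimensional inequality \eqref{e:percut}. Here the only change with respect to Theorem \ref{thm:G-conv} is the coercivity used on the slices: instead of \eqref{e:dist coerc} one uses that $W(x,\AAA)\geq\alpha|\AAA^\sym|^2$ and the elementary bound $|\AAA^\sym|^2\geq |\langle\AAA\xi,\xi\rangle|^2=|(\xi\otimes\xi):\AAA|^2$, together with the slicing identity $((u_\e)^\xi_y)'(t)\cdot\xi=\langle\nabla u_\e(y+t\xi)\xi,\xi\rangle$, to control the scalar function $t\mapsto (u_\e)^\xi_y(t)\cdot\xi$ in $W^{1,2}$ away from the small-$v_\e$ intervals. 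The Rellich--Kondrachov argument then forces this scalar slice to have no jumps wherever $v_\e$ stays bounded away from $0$, so every $t_i\in J_{(u)^\xi_y}$ with $[u^\xi_y](t_i)\cdot\xi\neq0$ is surrounded by an interval on which $v_{\e_j}$ drops to $0$ and returns to $1$, giving at least two jumps of $\chi_{\O_\e}$. Passing to the supremum over a dense family of directions $\xi_j$ and using \eqref{e:juxi} (valid since $\mathcal H^{n-1}(J_u)<+\infty$) yields \eqref{e:surfaceinq}; note that for $u\in PRD(\O)$ the jump $[u]$ is nowhere zero on $J_u$ since two distinct affine pieces $\AAA_ix+b_i$, $\AAA_\ell x+b_\ell$ with $\AAA_i,\AAA_\ell$ skew cannot agree on an $(n-1)$-dimensional set unless they coincide, which is what makes the slicing detect the full jump set.

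For the \emph{upper bound}, the inequality is trivial unless $u\in PRD(\O)$ and $v=1$, so write $u(x)=\sum_{i\in\N}(\AAA_i x+b_i)\chi_{E_i}(x)$ with $\AAA_i$ skew. Since $PRD(\O)$ is a linear space of $GSBD$ functions with the same structure as $PR(\O)$, the same density reduction applies: by truncating the partition to finitely many pieces and invoking \cite[Corollary 2.5]{BCG} to replace the Caccioppoli sets by polyhedral ones (the lower semicontinuity/upper bound argument for $\mathcal H^{n-1}(J_u)$ being identical, using $J_u=\bigcup_i\partial^*E_i$ from Theorem \ref{t:CFI}, whose hypotheses \eqref{C-a} are met here), it suffices to treat $u$ with $J_u=\bigcup_{i=1}^M S_i\cap\O$ a finite union of $(n-1)$-simplexes lying in distinct hyperplanes $\Pi_i$. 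The phase field $v_\e=\min\{v_\e^1,\dots,v_\e^M\}$ is constructed \emph{exactly} as in \eqref{e:v-e-i}--\eqref{e:v-e}, using the one-dimensional optimal profile $w_\eta$ from \eqref{e:w-eta}; the Modica--Mortola energy estimate \eqref{e:claim-v-e} carries over word for word (nothing in that part of the argument used the nonlinear structure of $W$). Finally the deformation recovery sequence is $u_\e:=(1-\varphi_\e)u$ with $\varphi_\e$ a cut-off supported in the tube $A^\e$ where $v_\e\equiv0$; since $\Phi(0)=0$ and $W(x,\AAA)=0$ whenever $\AAA^\sym=0$ — in particular for $\AAA=\AAA_i$ skew — we get $\Phi(v_\e)W(x,\nabla u_\e)=\Phi(v_\e)W(x,\nabla u)=0$ a.e.\ on $\O\setminus A^\e$ and $\Phi(v_\e)=0$ on $A^\e$, so the bulk term vanishes identically and $\limsup_\e E_\e(u_\e,v_\e)\leq 2(C_V+\eta)\mathcal H^{n-1}(J_u)$; letting $\eta\to0$ and diagonalising concludes.

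The step requiring the most care is the slicing lower bound, specifically verifying that the linearised coercivity $W(x,\AAA)\geq\alpha|\AAA^\sym|^2$ still controls enough of the one-dimensional derivative $((u_\e)^\xi_y)'$ to run the Rellich--Kondrachov contradiction. Unlike the $SO(n)$ case, $|\AAA^\sym|^2$ only controls the \emph{longitudinal} component $\langle\AAA\xi,\xi\rangle$ of $\AAA\xi$ along a given $\xi$ (the transverse/rotational components are invisible), so one can only conclude that the scalar slice $t\mapsto u^\xi_y(t)\cdot\xi$, rather than the full vector slice $u^\xi_y$, is Sobolev on intervals where $v_\e$ stays positive. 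This is precisely why the detection of jumps must be phrased through $J^\xi_u=\{x\in J_u:[u](x)\cdot\xi\neq0\}$ and why one must know $[u]\cdot\xi\neq0$ for a.e.\ $\xi$ on $\mathcal H^{n-1}$-a.e.\ point of $J_u$ — which holds for any $u\in GSBD$ with finite jump set, and a fortiori for $u\in PRD(\O)$. This is entirely analogous to the slicing arguments in \cite{Fr} and in the $GSBD$ lower-semicontinuity theory, and presents no genuinely new difficulty, but it is the point where the proof genuinely differs from that of Theorem \ref{thm:G-conv} and where a reader should be told to use the symmetrized bound and the refined jump set $J^\xi_u$ rather than $J_u$.
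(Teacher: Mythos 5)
Your proposal is correct and follows essentially the same route as the paper: the authors likewise reduce the lower bound to the slicing argument of Theorem \ref{thm:G-conv} with the single substitution of the scalar slices $\langle u^\xi_y,\xi\rangle$ for the vector ones, using $|\langle\AAA\xi,\xi\rangle|^2\le|\AAA^\sym|^2$ and the refined jump set $J_u^\xi$ exactly as you describe, and they take the upper bound verbatim from Step~2 of Theorem \ref{thm:G-conv} with rotations replaced by skew-symmetric matrices. Your closing observation that the longitudinal coercivity is the only genuinely new point is precisely the change the paper highlights.
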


\begin{proof}
The proof is very similar to that of Theorem~\ref{thm:G-conv}, therefore we highlight only the necessary changes, referring for the notation and the details to that proof.

In order to establish the lower bound inequality in Step~1 there, we have to show \eqref{e:surfaceinq}.
Assuming that $(u_\e,v_\e)$ converges to $(u,1)$ in $L^1(\O,\R^{n+1})$ and $\sup_\e E_\e(u_\e,v_\e)<+\infty$,
we can find a subsequence $(u_{\e_j},v_{\e_j})$ of $(u_\e,v_\e)$ 
such that \eqref{numsal} is satisfied, and in place of \eqref{e:sliceconv} and \eqref{e:fette} we have 
for $\mathcal{H}^{n-1}$-a.e.\ $y\in\pixi(\Omega)$
\begin{equation*}\label{e:sliceconv 2}
\big(\langle(u_{\e_j})^\xi_y,\xi\rangle,(v_{\e_j})^\xi_y\big) \to (\langle\uxiy,\xi\rangle,1)\;
\text{ in }\; L^1(\Omega^\xi_y,\R^n) \times L^1(\Omega^\xi_y),
\end{equation*}
and 
\begin{align}\label{e:fette 2}
\liminf_{j\to+\infty}\bigg(
\int_{U^\xi_y}&\bigg(\alpha \, k_{\e_j}(v_{\e_j})^\xi_y|\langle((u_{\e_j})^\xi_y)',\xi\rangle|^2 
+\frac {V\big((v_{\e_j})^\xi_y\big)}{\e_j}
+\e_j\big(((v_{\e_j})^\xi_y)'\big)^2\bigg)\,dt\notag\\
&+\Ho(J_{\chi_{(\O_{\e_j})^\xi_y}}\cap U)\bigg)\leq c<\infty,
\end{align}
for some constant $c>0$ (which may depend on $y$). To deduce \eqref{e:fette 2} we have used that 
$|\langle\AAA\xi,\xi\rangle|^2\leq|\AAA|^2$ for every $\xi\in\mathbb S^{n-1}$ and for every 
$\AAA\in\mathbb{M}^{n\times n}_\sym$.

Noting that by linearity $\langle ((u_{\e_j})^\xi_y)',\xi\rangle=\langle (u_{\e_j})^\xi_y,\xi\rangle'$,
the rest of the argument is exactly the same as the corresponding one in Theorem~\ref{thm:G-conv} now replacing $(u_{\e_j})^\xi_y$ and 
$\uxiy$ with $\langle(u_{\e_j})^\xi_y,\xi\rangle$ and $\langle\uxiy,\xi\rangle$, respectively.
 
The proof of the upper bound inequality in Step~2 of Theorem~\ref{thm:G-conv} remains unchanged up to replacing rotation matrices with antisymmetric ones.  
\end{proof}

\appendix 
\section{Proof of the rigidity lemma}\label{a:proof rigidity lemma}
\begin{proof}[Proof of Lemma \ref{l:rigidity}]
We start by showing statement (3), from which (1) and (2) immediately follow (we notice that actually the validity of (1) 
and (2) is equivalent to (3), as shown in \cite[Corollary 8.9]{Rindler}). 
To this end let $(u_j)$ and $u$ be as in (3); then $(\dist(\nabla u_j,\mathcal K))$ converges to $0$ in $L^p(U)$, indeed it converges to $0$ in measure and 
$\nabla u_j$ is bounded in $L^\infty(U,\MM^{n\times n})$. 
Thanks to \eqref{quant-rig} we can find $\AAA_j\in\mathcal K$ such that
\[
\|\nabla u_j-\AAA_j\|_{L^p(U,\MM^{n\times n})}\leq C\|\dist(\nabla u_j,\mathcal K)\|_{L^p(U)}\,.
\]
For an arbitrary subsequence $(j_k)$, we extract a further subsequence $(j_{k_h})$ such that $\AAA_{j_{k_h}}$ converges to some $\AAA\in\mathcal K$. Therefore, $(\nabla u_{j_{k_h}})$ converges to $\AAA$ in $L^p(U,\MM^{n\times n})$. This convergence combined with the weak* convergence of $(u_j)$ to $u$ in $W^{1,\infty}(U,\R^n)$ immediately gives $\nabla u=\AAA$ a.e. on $U$.
Being the limit independent of the subsequence, the Urysohn property implies that the whole sequence $(\nabla u_j)$ converges to $\AAA$ in $L^p(U,\MM^{n\times n})$ and hence the claim.

We finally prove (4) directly from \eqref{quant-rig}, despite 
its validity is well-known in literature as a consequence of (2) 
(cf. for instance \cite[Theorem 4.10]{MLN}). To conclude 
we only need to prove that $\mathcal K^{\rm qc}\subseteq \mathcal K$. 
To do so we use that
\[
\mathcal K^{\rm qc}=\{\AAA \in \MM^{n\times n} \colon Q(\dist^q(\cdot,\mathcal K))(\AAA)=0\}
\]
for every $q\in[1,+\infty)$ (cf. \cite[Proposition 2.14]{Zh92}, and also \cite[Theorem 4.10]{MLN}). 
Let $\AAA\in\mathcal K^{\rm qc}$; then the definition of quasi-convex envelope of the distance function yields the existence of $\varphi_j\in W^{1,\infty}_0(U,\R^n)$ such that 
\[
\lim_{j\to +\infty}\int_U \dist(\AAA+\nabla\varphi_j(x),\mathcal K)dx=0.
\]
Moreover, the Zhang Lemma  (cf. \cite{Zh92}, and also \cite[Lemma 4.21]{MLN})
provides us with a sequence $(\phi_j)\subset  W^{1,\infty}_0(U,\R^n)$ such that
\[
\sup_{j\in \N}\|\nabla\phi_j\|_{L^\infty(U,\MM^{n\times n})}<+\infty,\quad
\lim_{j\to +\infty}\mathcal L^n(\{\phi_j\neq\varphi_j\})=0\,.
\]
Thus, being $\mathcal K$ compact and $(\nabla\phi_j)$ bounded in $L^\infty(U,\MM^{n\times n})$, 
we obtain
\begin{align*}
\int_U \dist^p(\AAA+\nabla\phi_j(x),\mathcal K)dx&\leq
\int_U \dist^p(\AAA+\nabla\varphi_j(x),\mathcal K)dx
+C\mathcal L^n(\{\phi_j\neq\varphi_j\})\\
&\leq C\int_U \dist(\AAA+\nabla\varphi_j(x),\mathcal K)dx
+C\mathcal L^n(\{\phi_j\neq\varphi_j\})\,,
\end{align*}
thus, eventually,  
\[
\lim_{j\to +\infty}\int_U \dist^p(\AAA+\nabla\phi_j(x),\mathcal K)dx
=0\,.
\]
For $x\in U$ let $u_j(x)=\AAA x+\phi_j(x)$. By \eqref{quant-rig}, 
there exists $\AAA_j\in\mathcal K$ such that 
\[
\|\nabla u_j -\AAA_j\|_{L^p(U, \MM^{n\times n})} \leq C \|\dist (\nabla u_j,\mathcal K)\|_{L^p(U)}\,,
\]
then by the Jensen Inequality we get 
\[
\limsup_{j \to +\infty}\mathcal L^n(U)|\AAA-\AAA_j|^p\leq\lim_{j\to +\infty}\int_U|\AAA -\AAA_j+\nabla\phi_j|^pd x=0\,,
\]
and therefore $\AAA\in\mathcal K$.
\end{proof}

\section*{Acknowledgments}
\noindent The work of M. Cicalese was supported by the DFG Collaborative Research Center TRR 109, ``Discretization in Geometry and Dynamics''.
The work of C. I. Zeppieri was supported by the Deutsche Forschungsgemeinschaft (DFG, 
German Research Foundation)  under the Germany Excellence Strategy EXC 2044-390685587, Mathematics M\"unster: Dynamics--Geometry--Structure.
M. Focardi has been partially supported by GNAMPA.

\end{document}